\renewcommand\section{\@startsection {section}{1}{\z@}%
                                   {-3.5ex \@plus -1ex \@minus -.2ex}%
                                   {2.3ex \@plus.2ex}%
                                   {\centering\normalfont\bf}}
 \numberwithin{equation}{section}
\numberwithin{equation}{section}
\numberwithin{equation}{section}
\theoremstyle{plain}
\newtheorem{thm}{Theorem}[section]
\newtheorem{lemma}[thm]{Lemma}
\newtheorem{pro}[thm]{Proposition}
\newtheorem{de}[thm]{Definition}
\newtheorem{re}[thm]{Remark}
\newtheorem*{thm*}{Theorem}
\begin{document}
\title{The cardinality of orthogonal exponentials of planar self-affine measures with three-element digit sets}
\author{Ming-Liang Chen and Jing-Cheng Liu$^*$}
\address{Key Laboratory of High Performance Computing and Stochastic Information Processing (Ministry of Education of China), College of Mathematics and Statistics, Hunan Normal University, Changsha, Hunan 410081, China}
\email{jcliu@hunnu.edu.cn}
\email{mathcml@163.com}

\date{\today}
\keywords { Orthogonal exponential, Self-affine measure, Spectral measure, Zeros.}
\thanks{ The research is supported in part by the NNSF of China (Nos. 11301175 and 11571104),
 the SRF of Hunan Provincial Education Department (No.17B158).\\
$^*$Corresponding author.}

\begin{abstract}
In this paper, we consider the planar self-affine measures $\mu_{M,D}$ generated by  an expanding matrix $M\in M_2(\mathbb{Z})$ and an integer digit set
$
D=\left\{ {\left( {\begin{array}{*{20}{c}}
0\\
0
\end{array}} \right),\left( {\begin{array}{*{20}{c}}
\alpha_1\\
\alpha_2
\end{array}} \right),\left( {\begin{array}{*{20}{c}}
\beta_1\\
\beta_2
\end{array}} \right)} \right\}
$
with $\alpha_1\beta_2-\alpha_2\beta_1\neq0$. We show that if $\det(M)\notin 3\mathbb{Z}$, then the  mutually orthogonal exponential functions in $L^2(\mu_{M,D})$ is finite, and the exact maximal cardinality is given.
\end{abstract}

\maketitle

\section{\bf Introduction\label{sect.1}}

Let $M\in M_n(\mathbb{R})$ be an expanding  matrix(that is, all the eigenvalues of $M$ have moduli $>1$), and let $D\subset\mathbb{R}^n$ be a finite subset with cardinality $|D|$. Let $\{\phi_d\}_{d\in D}$ be an iterated function system (IFS) on $\mathbb{R}^n$ defined by
\begin{equation*}\label {eq(1.1)}
\phi_d(x)=M^{-1}(x+d)   \ \  (x\in \mathbb{R}^n,  \ \ d\in D).
\end{equation*}
Then the IFS arises a natural \emph{self-affine measure} $\mu:=\mu_{M,D}$ satisfying
\begin{equation}\label {eq(1.1)}
\mu=\frac{1}{|D|}\sum_{d\in D}\mu\circ\phi_d^{-1}.
\end{equation}
The measure $\mu_{M,D}$ is supported on the attractor of the IFS $\{\phi_d\}_{d\in D}$\cite{Hutchinson_1981}.

For a countable subset $\Lambda\subset\mathbb{R}^n$, let $\mathcal {E}_\Lambda=\{e^{2\pi i\langle\lambda,x\rangle}:\lambda\in\Lambda\}$. We call $\mu$  a spectral measure, and $\Lambda$ a  spectrum of $\mu$ if $\mathcal {E}_\Lambda$ is an orthogonal basis for $L^2(\mu)$. We also say that $(\mu,\Lambda)$ is a $spectral$ $pair$. The existence of a spectrum for $\mu$ is a basic problem in harmonic analysis, it was initiated by Fuglede in his seminal paper \cite{Fuglede_1974}. After the original work of Fuglege, the spectral problem has been investigated in a variety of different
mathematical fields. The first example of a singular, non-atomic, spectral measure which is supported on $\frac{1}{4}$-Cantor set was given by Jorgensen and Pedersen in \cite{Jorgensen-Pedersen_1985}. This surprising discovery received a lot of attention, and the spectrality of self-affine measures has become a hot topic. Many new spectral measures  were found  in \cite{Dai_2012}-\cite{Dutkay-Haussermann-Lai_2015}, \cite{Hu-Lau_2008}, \cite{Li_2011}-\cite{Li_2012}, \cite{Liu-Luo_2017} and references therein. For more general cases such as Moran measures, the reader can refer to \cite{An-He_2014}-\cite{An-He-Lau_2015}.

 \par On the other hand, the non-spectral problem of  self-affine measures is also very interesting.  In \cite{Dutkay-Jorgensen_2007}, Dutkay and Jorgensen showed that if
$
M=\left[ {\begin{array}{*{20}{c}}
{{p}}&{{0}}\\
{{0}}&{{{p}}}
\end{array}} \right]
$ with $p\in \mathbb{Z}\setminus 3\mathbb{Z}$, $p\geq 2$ and $\mathscr{D}=\{(0,0)^t, (1,0)^t, (0,1)^t\}$,
then there are no 4 mutually orthogonal exponential functions in $L^2(\mu_{M,\mathscr{D}})$. Moreover, they  proved that if
$
M=\left[ {\begin{array}{*{20}{c}}
{{2}}&{{1}}\\
{{0}}&{{{2}}}
\end{array}} \right],
$
then there exist at most $7$ mutually orthogonal exponential  functions in $L^2(\mu_{M,\mathscr{D}})$.
In \cite{Li_2008}, Li proved that if  the expanding integer matrix
$
M=\left[ {\begin{array}{*{20}{c}}
{{a}}&{{b}}\\
{{0}}&{{{c}}}
\end{array}} \right]
$ with $\det(M)\not\in 3\mathbb{Z}$, then there exist at most $3$ mutually orthogonal exponential  functions in $L^2(\mu_{M,\mathscr{D}})$, and the number 3 is the best.
More recently, Liu, Dong and Li \cite{Liu-Dong-Li_2017} extended the above upper triangular matrix  to
$M=\left[ {\begin{array}{*{20}{c}}
{{a}}&{{b}}\\
{{d}}&{{{c}}}
\end{array}} \right]$ with $\det(M)\not\in 3\mathbb{Z}$,
and proved that there exist at most $9$ mutually orthogonal exponential  functions in $L^2(\mu_{M,\mathscr{D}})$, and the number $9$ is the best.

Let $D=\{(0,0)^t,(\alpha_1, \alpha_2)^t, (\beta_1,\beta_2)^t\}$, if we assume that $\alpha_1\beta_2-\alpha_2\beta_1=1$, it can be easily seen that there exist at most $9$ mutually orthogonal exponential  functions in $L^2(\mu_{M,D})$  by Theorem 1.1 of \cite{Liu-Dong-Li_2017}. A natural question is whether the number $9$ is suitable for any three-element integer digit set? Motivated by the previous research, we will give a complete answer in this paper. Without loss of generality, we may assume that $\gcd(\alpha_1,\alpha_2,\beta_1,\beta_2)=1$ by Lemma \ref{th(2.2)}.

\begin{thm} \label{th(1.1)} For an expanding matrix $M\in M_2(\mathbb{Z})$  with $\det(M)\notin 3\mathbb{Z}$ and an integer digit set  $D=\{(0,0)^t,(\alpha_1,\alpha_2)^t,(\beta_1,\beta_2)^t\}$, let $\mu_{M,D}$ be defined by \eqref{eq(1.1)}. The following hold.

(i)  If $2\alpha_1-\beta_1\notin3\mathbb{Z}$ or $2\alpha_2-\beta_2\notin3\mathbb{Z}$, then there exist at most 9 mutually orthogonal exponential functions in $L^2(\mu_{M,D})$, and the number 9 is the best.

(ii) If $2\alpha_1-\beta_1, 2\alpha_2-\beta_2$
$\in3\mathbb{Z}$, then there exist at most $3^{2\eta}$ mutually orthogonal exponential functions in $L^2(\mu_{M,D})$, and the number $3^{2\eta}$ is the best, where $\eta=\max\{r:3^r|(\alpha_1\beta_2-\alpha_2\beta_1)\}$.
\end{thm}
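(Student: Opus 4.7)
My plan is to reduce the problem to a residue-class count modulo a power of $3$, obtained from the zero set of the Fourier transform $\hat\mu_{M,D}$.

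Since $\hat\mu_{M,D}(\xi)=\prod_{j\ge 1}m_D((M^t)^{-j}\xi)$ with $m_D(\xi)=\tfrac{1}{3}(1+e^{2\pi i\langle\alpha,\xi\rangle}+e^{2\pi i\langle\beta,\xi\rangle})$, orthogonality of $\{e^{2\pi i\langle\lambda,\cdot\rangle}\}_{\lambda\in\Lambda}$ is equivalent to all pairwise differences lying in $\mathcal{Z}(\hat\mu_{M,D})=\bigcup_{j\ge1}(M^t)^j\mathcal{Z}(m_D)$. The elementary identity $1+e^{2\pi iu}+e^{2\pi iv}=0\iff(u,v)\equiv(\tfrac{1}{3},\tfrac{2}{3})$ or $(\tfrac{2}{3},\tfrac{1}{3})\pmod{1}$ gives, with $A=\bigl(\begin{smallmatrix}\alpha_1&\alpha_2\\\beta_1&\beta_2\end{smallmatrix}\bigr)$ and $\delta=\det A\neq 0$,
$$\mathcal{Z}(m_D)=A^{-1}\bigl(\tfrac{1}{3}(1,2)+\mathbb{Z}^2\bigr)\cup A^{-1}\bigl(\tfrac{1}{3}(2,1)+\mathbb{Z}^2\bigr).$$
Since $\gcd(\det M,3)=1$, $M^t$ is invertible modulo every power of $3$, so residue arguments modulo $3^k$ apply directly to pull-backs by $(M^t)^{-j}$.

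For case (i), the hypothesis "$2\alpha_i-\beta_i\notin 3\mathbb{Z}$ for some $i$" is equivalent to $\{0,\alpha,\beta\}\bmod 3$ not being a cyclic subgroup of $(\mathbb{Z}/3\mathbb{Z})^2$. Using Lemma~\ref{th(2.2)} to assume $0\in\Lambda$ and locate $\Lambda$ in an appropriate lattice, I would show that if $\lambda_1\neq\lambda_2$ are orthogonal then $\lambda_1\not\equiv\lambda_2\pmod 3$. Concretely, writing $\lambda_1-\lambda_2=(M^t)^j\xi$ with $\xi\in\mathcal{Z}(m_D)$ gives $3A(M^t)^{-j}(\lambda_1-\lambda_2)\in\{(1,2),(2,1)\}+3\mathbb{Z}^2$, and the non-subgroup hypothesis together with a $3$-adic valuation comparison rules out $\lambda_1\equiv\lambda_2\pmod 3$. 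Pigeonhole then yields $|\Lambda|\le 9$. Sharpness is obtained by transporting the optimal $9$-element orthogonal family for $\mathscr{D}=\{0,e_1,e_2\}$ from Liu-Dong-Li_2017 through the identity $D=A^t\mathscr{D}$.

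For case (ii), the hypothesis $\beta\equiv-\alpha\pmod 3$ makes the adjugate vectors $A^*(1,2)=(\beta_2-2\alpha_2,2\alpha_1-\beta_1)$ and $A^*(2,1)=(2\beta_2-\alpha_2,\alpha_1-2\beta_1)$ both lie in $3\mathbb{Z}^2$, so $\mathcal{Z}(m_D)\subset\tfrac{1}{\delta}\mathbb{Z}^2$ instead of $\tfrac{1}{3\delta}\mathbb{Z}^2$. Writing $\delta=3^\eta\delta'$ with $\gcd(\delta',3)=1$, the zero set has $3$-adic denominator exactly $3^\eta$, and the same residue-class argument operates modulo $3^\eta$ rather than modulo $3$. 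I would then iterate the case (i) argument $\eta$ times: the residue map $\Lambda\to(\mathbb{Z}/3\mathbb{Z})^2$ is at most $9$-to-$1$, and each fibre injects, via descent to an auxiliary self-affine measure $\mu_{M,D^*}$ whose new determinant has $3$-adic valuation $\eta-1$, into an orthogonal family bounded by $3^{2(\eta-1)}$. Stacking gives $|\Lambda|\le 9^\eta=3^{2\eta}$. Sharpness follows from a matched recursive construction, building $3^{2\eta}$ explicit orthogonal exponentials one $3$-adic layer at a time from the base $9$-element family.

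The main obstacle is the descent/lifting in case (ii). The upper bound requires identifying the correct auxiliary digit set $D^*$ at each level (roughly, factoring a $3$ out of the structure of $\alpha,\beta\pmod{3^k}$) and checking that orthogonal exponentials of $\mu_{M,D}$ sharing a residue class modulo $3$ biject with orthogonal exponentials of $\mu_{M,D^*}$; this is where the hypothesis $\gcd(\det M,3)=1$ is used most delicately to ensure $(M^t)^j$ permutes residue classes in $(\mathbb{Z}/3^\eta\mathbb{Z})^2$. The matching sharp construction needs an explicit recursive lift, producing an optimal family at each of the $\eta$ layers and verifying pairwise orthogonality across the lifting, which is the core computational step.
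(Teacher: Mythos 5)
Your zero-set computation and the upper bounds are essentially the paper's route: reduce to the observation that $1+e^{2\pi iu}+e^{2\pi iv}=0$ forces $(u,v)\equiv(\tfrac13,\tfrac23)$ or $(\tfrac23,\tfrac13)\pmod{\mathbb{Z}^2}$, locate $\mathcal{Z}(m_D)$ in a finite set of nonzero residue classes that $M^{*}$ permutes (Lemmas \ref{th(2.4)} and \ref{th(2.5)}), and pigeonhole. For (ii) your iterated descent through auxiliary digit sets $D^{*}$ is unnecessary: once one shows (as the paper does after the normalization $D_3=A_3^{-1}D$) that $\mathcal{Z}(m_{D_3})\subset\mathring{E}_{3^{\eta}}^2+\mathbb{Z}^2$, a single pigeonhole over the $3^{2\eta}-1$ nonzero classes gives the bound. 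The genuine gaps are both in the sharpness claims. In case (i), transporting the optimal $9$-element family for $\mathscr{D}=\{(0,0)^t,(1,0)^t,(0,1)^t\}$ through $D=A^{t}\mathscr{D}$ only works when $\delta:=\alpha_1\beta_2-\alpha_2\beta_1\notin3\mathbb{Z}$. Lemma \ref{th(2.2)} requires the conjugated matrix $(A^{t})^{-1}MA^{t}$ to be an expanding \emph{integer} matrix before the Liu--Dong--Li result applies, and to build an example one must choose an integer $\widetilde M$ achieving $9$ for $\mathscr{D}$ with $A^{t}\widetilde M(A^{t})^{-1}$ integral; the natural choice $\widetilde M=\delta N$ gives $\det M=\delta^2\det N\in3\mathbb{Z}$ whenever $3\mid\delta$, which the hypothesis of (i) does not exclude (e.g.\ $\alpha=(1,0)^t$, $\beta=(0,3)^t$). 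This sub-case, Case 2 of the paper's proof of (i), is where most of the work lies and your sketch does not address it.

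The second gap is that the ``matched recursive construction'' for sharpness in (ii) is a placeholder for the paper's main technical contribution. One must exhibit a single expanding integer matrix $M$ with $\gcd(\det M,3)=1$ whose powers sweep the one-column set $\mathcal{A}_3(\eta)$ onto all of $\mathring{E}_{3^{\eta}}^2$ modulo $\mathbb{Z}^2$. The paper does this by choosing $M$ whose reduction mod $3$ is an ergodic matrix of order $8$ in $GL(2,\mathbb{F}_3)$, proving the order-lifting statements $O_{3^{s}}(M)=8\cdot3^{s-1}$ (Theorems \ref{th(3.1)} and \ref{th(3.3)}) and the exact orbit count of Theorem \ref{th(3.4)}, and then using the Euler-theorem scaling $\Lambda=\mathcal{N}L^{\varphi(3^{\eta})\cdot8\cdot3^{\eta-1}}\tau\vartheta E$ with Lemma \ref{th(3.6)} to convert orbit membership modulo $\mathbb{Z}^2$ into genuine membership in $\mathcal{Z}(\hat{\mu}_{M,D})$ (note $M^{*j}(\lambda+\mathbb{Z}^2)\neq M^{*j}\lambda+\mathbb{Z}^2$, so this step is not cosmetic). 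A layer-by-layer lift from a $9$-element base family does not obviously preserve pairwise orthogonality across layers without these ingredients, and none of them appear in your proposal.
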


The case (ii) of Theorem \ref{th(1.1)} actually follows from a more general result. Before stating the result, we need some definitions and notations.

\par
For positive integers $p,q\geq 2$ and $s\geq1$, let
\begin{equation}\label{eq(1.2)}
E_q^n:=\frac{1}{q}\{(l_1,l_2,\cdots,l_n)^t:0\leq l_1,\cdots,l_n\leq q-1, l_i\in\mathbb{Z}\}, \ \ \ \ \ \ \ \mathring{E}_q^n:=E_q^n\setminus\{0\}
\end{equation}
and
\begin{equation}\label{eq(1.3)}
\mathcal {A}_p(s):=\frac{1}{p^s}\{(p^{s-1},l)^t:0\leq l\leq p^s-1, l\in \mathbb{Z}\}.
\end{equation}
For a finite digit set $D\subset\mathbb{R}^n$, let
\begin{equation}\label{eq(1.4)}
m_{D}(x)=\frac{1}{|D|}\sum\limits_{d\in D}{e^{2\pi i\langle d,x\rangle}}, \quad x\in\mathbb{R}^n, \ \ \ \ \ \ \ \  \mathcal {Z}(m_{D}):=\{ x\in \Bbb R^n:\;m_{D}(x)=0 \},
\end{equation}
where $m_{D}(x)$ is called the\emph{ mask polynomial} of $D$ as usual.
Define
\begin{equation}\label{eq(1.5)}
\mathcal{Z}_{D}^n:=\mathcal{Z}(m_{D})\cap [0, 1)^n.
\end{equation}
It is easy to see that $m_D$ is a $\mathbb{Z}^n$-periodic function if $D\subset \mathbb{Z}^n$. In this case,
$\mathcal{ Z}(m_D)=\mathcal{Z}_{D}^n+\mathbb{Z}^n$.

\begin{thm} \label{th(1.2)}
Assume integers $p\geq 2,\bar{\eta}\geq1$ and a finite digit set $D\subset\mathbb{R}^2$. Let $M\in M_2(\mathbb{Z})$ be an expanding matrix with $\gcd(\det(M),p)=1$, and let
$\mu_{M,D}$, $\mathring{E}_{p^{\bar{\eta}}}^2$, $\mathcal {A}_p({\bar{\eta}})$, $\mathcal{ Z}(m_D)$ be defined by \eqref{eq(1.1)}, \eqref{eq(1.2)}, \eqref{eq(1.3)} and \eqref{eq(1.4)}, respectively. If $\mathcal{ Z}(m_D)\subset \mathring{E}_{p^{\bar{\eta}}}^2+\mathbb{Z}^2$, then there exist at most $p^{2{\bar{\eta}}}$ mutually
orthogonal exponential functions  in $L^2(\mu_{M,D})$.  Moreover, if $p\geq 3$ is a prime and there exists $\mathcal{N} \in \mathbb{Z}\setminus p\mathbb{Z}$ such that $\mathcal{N}(\mathcal {A}_p({\bar{\eta}})+\mathbb{Z}^2)\subset \mathcal{ Z}(m_D)$, then the number $p^{2{\bar{\eta}}}$ is the best.
\end{thm}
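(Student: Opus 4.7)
The plan is to translate orthogonality into a condition on the zero set of $\widehat{\mu}_{M,D}$ and then exploit the $p^{\bar{\eta}}$-denominator structure of $\mathcal{Z}(m_D)$ via a reduction modulo $p^{\bar{\eta}}\mathbb{Z}^{2}$. First I would use the standard criterion that two exponentials $e^{2\pi i\langle \lambda_1,\cdot\rangle}, e^{2\pi i\langle \lambda_2,\cdot\rangle}$ are $\mu_{M,D}$-orthogonal iff $\widehat{\mu}_{M,D}(\lambda_1-\lambda_2)=0$ together with the infinite-product expression $\widehat{\mu}_{M,D}(\xi)=\prod_{k=1}^{\infty}m_D((M^{t})^{-k}\xi)$, so that the zero set of $\widehat{\mu}_{M,D}$ is $\bigcup_{k\ge 1}(M^{t})^{k}\mathcal{Z}(m_D)$. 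A mutually orthogonal family $\{e^{2\pi i\langle \lambda,\cdot\rangle}\}_{\lambda\in\Lambda}$ is therefore equivalent to $(\Lambda-\Lambda)\setminus\{0\}\subset \bigcup_{k\ge 1}(M^{t})^{k}\mathcal{Z}(m_D)$. The hypothesis $\mathcal{Z}(m_D)\subset \mathring{E}_{p^{\bar{\eta}}}^{2}+\mathbb{Z}^{2}\subset \frac{1}{p^{\bar{\eta}}}\mathbb{Z}^{2}$ combined with $M\in M_2(\mathbb{Z})$ forces $\bigcup_{k}(M^{t})^{k}\mathcal{Z}(m_D)\subset \frac{1}{p^{\bar{\eta}}}\mathbb{Z}^{2}$, so after a harmless translation placing $0\in\Lambda$ I obtain $\Lambda\subset \frac{1}{p^{\bar{\eta}}}\mathbb{Z}^{2}$.

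To prove $|\Lambda|\le p^{2\bar{\eta}}$ I would then study the reduction map $\pi\colon\Lambda\to (\mathbb{Z}/p^{\bar{\eta}}\mathbb{Z})^{2}$ given by $\pi(\lambda)=p^{\bar{\eta}}\lambda\bmod p^{\bar{\eta}}\mathbb{Z}^{2}$, whose target has cardinality exactly $p^{2\bar{\eta}}$, and establish injectivity. If $\pi(\lambda_1)=\pi(\lambda_2)$ for some distinct $\lambda_1,\lambda_2\in\Lambda$, then $\lambda_1-\lambda_2\in\mathbb{Z}^{2}$, while by orthogonality $\lambda_1-\lambda_2=(M^{t})^{k}z$ for some $k\ge 1$ and $z\in\mathcal{Z}(m_D)$. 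Decomposing $z=u/p^{\bar{\eta}}+v$ with $u\in\{0,\ldots,p^{\bar{\eta}}-1\}^{2}\setminus\{0\}$ and $v\in\mathbb{Z}^{2}$, one obtains $(M^{t})^{k}u\equiv 0\pmod{p^{\bar{\eta}}}$. The assumption $\gcd(\det M,p)=1$ makes $(M^{t})^{k}$ a unit of $M_2(\mathbb{Z}/p^{\bar{\eta}}\mathbb{Z})$, so $u\equiv 0\pmod{p^{\bar{\eta}}}$, contradicting $u\ne 0$. Hence $\pi$ is injective and $|\Lambda|\le p^{2\bar{\eta}}$.

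For the sharpness under the extra hypothesis that $p\ge 3$ is prime and $\mathcal{N}(\mathcal{A}_p(\bar{\eta})+\mathbb{Z}^{2})\subset \mathcal{Z}(m_D)$, I would construct an explicit $\Lambda$ with $|\Lambda|=p^{2\bar{\eta}}$. The set $\mathcal{N}\mathcal{A}_p(\bar{\eta})$ supplies $p^{\bar{\eta}}$ zeros lying on the single affine ``line'' of first coordinate $\mathcal{N}/p$ in $\frac{1}{p^{\bar{\eta}}}\mathbb{Z}^{2}/\mathbb{Z}^{2}$, so a single copy only yields $p^{\bar{\eta}}$ orthogonal exponentials; the natural attempt is a two-stage sum of the shape $\Lambda=\{\mathcal{N}M^{t}a+\mathcal{N}(M^{t})^{\ell}b:a,b\in \mathcal{A}_p(\bar{\eta})\}$ for a suitable exponent $\ell>1$, with $\ell$ chosen so that $(M^{t})^{\ell-1}$ rotates $\mathcal{A}_p(\bar{\eta})$ into a direction transverse to $\mathcal{A}_p(\bar{\eta})$ modulo $p^{\bar{\eta}}$. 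Factoring the lowest differing block out of any pairwise difference, the residual vector should lie in $\mathcal{N}(\mathcal{A}_p(\bar{\eta})+\mathbb{Z}^{2})\subset \mathcal{Z}(m_D)$ by the hypothesis, and the primality of $p$ together with the invertibility of $M^{t}$ and $\mathcal{N}$ modulo $p^{\bar{\eta}}$ guarantees that the $p^{2\bar{\eta}}$ candidates are genuinely distinct. This sharpness step is the main obstacle: the upper bound is a clean mod-$p^{\bar{\eta}}$ pigeonhole argument, whereas producing the optimiser requires assembling the single ``column'' of zeros provided by $\mathcal{A}_p(\bar{\eta})$ with iterates of $M^{t}$ in such a way as to cover every residue class of $(\mathbb{Z}/p^{\bar{\eta}}\mathbb{Z})^{2}$ without collision.
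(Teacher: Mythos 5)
Your upper bound argument is correct and is essentially the paper's: after translating $0$ into $\Lambda$, every nonzero difference lies in $\mathring{E}_{p^{\bar\eta}}^{2}+\mathbb{Z}^{2}$ because $M^{*}$ (being invertible modulo $p^{\bar\eta}$) maps $\mathring{E}_{p^{\bar\eta}}^{2}+\mathbb{Z}^{2}$ into itself, and then no two points of $\Lambda$ can be congruent modulo $\mathbb{Z}^{2}$; this is exactly Lemma 2.5 plus the pigeonhole Lemma 2.4 of the paper, phrased as injectivity of reduction mod $p^{\bar\eta}$.

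The sharpness part, however, has a genuine gap, and it is precisely where the real work of the paper lies. Your proposed set $\Lambda=\{\mathcal{N}M^{t}a+\mathcal{N}(M^{t})^{\ell}b:a,b\in\mathcal{A}_p(\bar\eta)\}$ cannot be verified by ``factoring the lowest differing block'': every element of $\mathcal{A}_p(\bar\eta)$ has first coordinate $p^{\bar\eta-1}/p^{\bar\eta}=1/p$, so a difference $a_1-a_2$ of two distinct elements has first coordinate $0$ and lies \emph{outside} $\mathcal{A}_p(\bar\eta)+\mathbb{Z}^{2}$; the residual vector after factoring out $M^{t}$ therefore does not land in the only part of $\mathcal{Z}(m_D)$ you are given, namely $\mathcal{N}(\mathcal{A}_p(\bar\eta)+\mathbb{Z}^{2})$, and the same problem occurs for differences with $a_1=a_2$, $b_1\neq b_2$. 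A second, independent issue is that sharpness is an existence statement over the matrix: for a generic $M$ the bound $p^{2\bar\eta}$ is \emph{not} attained (cf. Lemma 3.7), so one must construct a particular expanding $M_0$. The paper does this by choosing $M_0^{*}$ of maximal order $p^{2}-1$ in $GL(2,\mathbb{F}_p)$ (an ergodic matrix) and then proving, via a careful analysis of orders modulo $p^{s}$ (Theorems 3.1, 3.3, 3.4), that the single forward orbit $\bigcup_{j=1}^{(p^{2}-1)p^{\bar\eta-1}}M_0^{*j}\mathcal{A}_p(\bar\eta)$ already covers all of $\mathring{E}_{p^{\bar\eta}}^{2}$ modulo $\mathbb{Z}^{2}$; one then takes $\Lambda$ to be a suitable scalar multiple of the full grid $E_{p^{\bar\eta}}^{2}$, the scalar $L^{\varphi(p^{\bar\eta})(p^{2}-1)p^{\bar\eta-1}}$ being needed (via Euler's theorem and Lemma 3.6) to compensate for the fact that $M^{*j}(\lambda+\mathbb{Z}^{2})$ is larger than $M^{*j}\lambda+\mathbb{Z}^{2}$. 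None of this orbit-covering machinery, nor the integer-translate correction, appears in your proposal, and without it the construction of a maximal orthogonal family of size $p^{2\bar\eta}$ does not go through.
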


We arrange the paper as follows. In Section 2, we recall a few basic concepts and notations, establish several lemmas that will be needed in the proof of our main results. In Section 3, we give the detailed proofs of Theorems \ref{th(1.1)} and \ref{th(1.2)}.

\section{\bf Preliminaries\label{sect.2}}

In this section, we give some  preliminary definitions and  lemmas. We will start with an introduction to the Fourier transform. For a $n\times n$ expanding real matrix $M$ and a finite digit set $D\subset\mathbb{R}^n$, let $\mu_{M,D}$ be defined by \eqref{eq(1.1)}. The Fourier transform
$\hat{\mu}_{M,D}(\xi)=\int e^{2\pi i\langle x,\xi\rangle }d\mu_{M,D}(x)~(\xi\in\mathbb{R}^n)$ of $\mu_{M,D}$ plays an important role in the study of the spectrality of $\mu_{M,D}$. It follows from \cite{Dutkay-Jorgensen_2007} that
\begin{equation}\label {eq(2.1)}
\hat{\mu}_{M,D}(\xi)=\prod_{j=1}^\infty m_D({M^{*}}^{-j}\xi), \ \ \ \ \  \xi\in\mathbb{R}^n,
\end{equation}
where $M^*$ denotes the transposed  conjugate of $M$, and
$$
m_D(x)=\frac{1}{|D|}\sum\limits_{d\in D}{e^{2\pi i\langle d,x\rangle}},\ \ \ \ \ x\in\mathbb{R}^n.
$$
 For any $\lambda_1, \lambda_2\in \mathbb{R}^n$, $\lambda_1\neq \lambda_2$, the orthogonality condition
$$
0=\langle e^{2\pi i \langle \lambda_1,x\rangle},e^{2\pi i \langle \lambda_2,x\rangle}\rangle_{L^2(\mu_{M,D})}=\int e^{2\pi i \langle\lambda_1-\lambda_2,x\rangle}d\mu_{M,D}(x)=\hat{\mu}_{M,D}(\lambda_1-\lambda_2)
$$
relates to the zero set $\mathcal{Z}(\hat{\mu}_{M,D})$ directly. It is easy to see that for a countable subset $\Lambda\subset\mathbb{R}^n$,  $\mathcal {E}_\Lambda=\{e^{2\pi i\langle\lambda,x\rangle}:\lambda\in\Lambda\}$ is an orthonormal family of $L^2(\mu_{M,D})$ if and only if
 \begin{equation}\label {eq(2.2)}
 (\Lambda-\Lambda)\setminus\{0\}\subset\mathcal{ Z}(\hat{\mu}_{M,D}).
\end{equation}
From \eqref{eq(2.1)}, we have
$\mathcal{Z}(\hat{\mu}_{M,D})=\Big\{\xi\in \mathbb{R}^n: \exists\; j\in \mathbb{N} \ {\rm {such  \ that}}\ m_D(M^{*-j}\xi)=0\Big\}$. Hence
\begin{equation}\label {eq(2.3)}
\mathcal{ Z}(\hat{\mu}_{M,D})=\bigcup_{j=1}^{\infty}M^{*j}(\mathcal{ Z}(m_D)),
\end{equation}
where $\mathcal{ Z}(m_D)=\{x\in\mathbb{R}^n:m_D(x)=0\}$.

\begin{de}\label{de(2.1)}
Let  $\mu$ be a Borel probability measure with compact support on $\mathbb{R}^n$.  Let $\Lambda$ be a finite or a countable subset of $\mathbb{R}^n$, and let ${\mathcal E}_{\Lambda}=\{e^{2\pi i\langle\lambda,x\rangle}:\lambda\in\Lambda\}$. We denote ${\mathcal E}_{\Lambda}$ by  ${\mathcal E}^*_{\Lambda}$ if ${\mathcal E}_{\Lambda}$ is a maximal orthogonal set of exponential functions in $L^2(\mu)$. Let
\begin{align}\label{eq(2.4)}
n^*(\mu):=\sup\{ |\Lambda| :  \mathcal {E}_{\Lambda}^* \hbox { is a maximal orthogonal set} \},
\end{align}
and call $n^*(\mu)$  the maximal cardinality of the orthogonal exponential functions  in $L^2(\mu_{M,D})$.
\end{de}

The following lemma indicates that  the spectral properties of
$\mu_{M,D}$ are invariant under a linear transform. The proof is the same as that of Lemma 4.1 of {\rm \cite{Dutkay-Jorgensen_2007}}.

\begin{lemma}\label{th(2.2)}
Let $D,\widetilde{D}\subset \Bbb R^n$ be two finite digit sets with the same cardinality, and $M,\widetilde{M}\in M_n(\Bbb R)$ be two expanding matrices. If there exists a matrix  $Q\in M_n(\mathbb{R})$  such that $\widetilde{D}=QD$ and $\widetilde{M}=QMQ^{-1}$, then

 {\rm(}i{\rm)}\;\;a set $\Lambda\subset \Bbb R^n$ is an orthogonal set for $\mu_{M,D}$ if and only if $\widetilde{\Lambda}:=Q^{*-1}\Lambda$ is an orthogonal set for $\mu_{\widetilde{M},\widetilde{D}}$. In particular, $n^*(\mu_{M,D})=n^*(\mu_{\widetilde{M},\widetilde{D}})$;

{\rm (}ii{\rm)}\;the $\mu_{M,D}$ is a spectral measure with spectrum $\Lambda$ if and only if the $\mu_{\widetilde{M},\widetilde{D}}$ is a spectral measure with spectrum $Q^{*-1}\Lambda$.
\end{lemma}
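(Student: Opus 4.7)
The plan is to exploit the fact that $Q$ conjugates the two iterated function systems. Writing $\phi_d(x) = M^{-1}(x+d)$ and $\tilde\phi_{\tilde d}(y) = \widetilde{M}^{-1}(y + \tilde d)$, a direct computation using $\widetilde{M} = QMQ^{-1}$ and $\widetilde{D} = QD$ shows $\tilde\phi_{\tilde d}\circ Q = Q\circ \phi_d$. Combined with the uniqueness of the invariant probability measure for a contractive IFS, this identifies $\mu_{\widetilde{M},\widetilde{D}}$ with the push-forward $\mu_{M,D}\circ Q^{-1}$. One can verify this directly by plugging $\mu_{M,D}\circ Q^{-1}$ into the invariance equation \eqref{eq(1.1)} for the tilded system and using the intertwining relation.

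With the push-forward identification in hand, the Fourier transforms are related by the change of variable $\hat\mu_{\widetilde{M},\widetilde{D}}(\xi) = \int e^{2\pi i\langle Qx,\xi\rangle}\,d\mu_{M,D}(x) = \hat\mu_{M,D}(Q^{*}\xi),$ so $\mathcal{Z}(\hat\mu_{\widetilde{M},\widetilde{D}}) = Q^{*-1}\mathcal{Z}(\hat\mu_{M,D})$. Combining this with the orthogonality criterion \eqref{eq(2.2)} and the identity $\widetilde\Lambda - \widetilde\Lambda = Q^{*-1}(\Lambda - \Lambda)$ immediately yields part~(i): the containment $(\widetilde\Lambda-\widetilde\Lambda)\setminus\{0\}\subset \mathcal{Z}(\hat\mu_{\widetilde{M},\widetilde{D}})$ is equivalent to $(\Lambda-\Lambda)\setminus\{0\}\subset\mathcal{Z}(\hat\mu_{M,D})$. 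The equality $n^{*}(\mu_{M,D}) = n^{*}(\mu_{\widetilde{M},\widetilde{D}})$ then follows because the map $\Lambda\mapsto Q^{*-1}\Lambda$ is a bijection preserving cardinality, hence it sends maximal orthogonal sets to maximal orthogonal sets.

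For part~(ii), I would introduce the composition operator $U\colon L^{2}(\mu_{\widetilde{M},\widetilde{D}})\to L^{2}(\mu_{M,D})$ given by $Uf = f\circ Q$. The change-of-variables for the push-forward measure shows at once that $U$ is an isometric isomorphism, and a one-line calculation gives $U(e^{2\pi i\langle \tilde\lambda,\cdot\rangle}) = e^{2\pi i\langle Q^{*}\tilde\lambda,\cdot\rangle}$. Thus $U$ carries $\mathcal{E}_{\widetilde\Lambda}$ bijectively onto $\mathcal{E}_{\Lambda}$, so the former is an orthonormal basis of $L^{2}(\mu_{\widetilde{M},\widetilde{D}})$ if and only if the latter is an orthonormal basis of $L^{2}(\mu_{M,D})$, proving the spectral equivalence.

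The argument is essentially formal, and I do not anticipate a genuine obstacle: the only subtle points are checking that $Q$ is invertible (which is forced by $\widetilde{M} = QMQ^{-1}$ and the expansiveness of both matrices) and being careful with the appearance of the transpose $Q^{*}$ when passing between Fourier side and physical side.
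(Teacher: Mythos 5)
Your argument is correct and is essentially the standard one the paper invokes: the paper gives no proof of this lemma, simply citing Lemma 4.1 of Dutkay--Jorgensen, whose proof is exactly your conjugation/push-forward computation identifying $\mu_{\widetilde{M},\widetilde{D}}$ with $\mu_{M,D}\circ Q^{-1}$ and transferring orthogonality via $\hat{\mu}_{\widetilde{M},\widetilde{D}}(\xi)=\hat{\mu}_{M,D}(Q^{*}\xi)$. No gaps; the only implicit point (invertibility of $Q$) is already built into the hypothesis $\widetilde{M}=QMQ^{-1}$.
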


For any three-element digit set $D=\{(0,0)^t,(\alpha_1, \alpha_2)^t, (\beta_1,\beta_2)^t\}$ and an expanding matrix $M$ with $\gcd(\det(M),3)=1$, if there exists an invertible matrix $Q$ such that $\mathscr{D}=QD=\{(0,0)^t,(1, 0)^t, (0,1)^t\}$ and  $\widetilde{M}=QMQ^{-1}$ is  an expanding integer matrix, then the following lemma can be used to judge the maximum number of the orthogonal exponential functions in $L^2(\mu_{M,D})$ by lemma \ref{th(2.2)}.

\begin{lemma}  \cite [Corollary 4.1]{Liu-Dong-Li_2017} \label{th(2.3)}\;\;For an expanding matrix $\widetilde{M}\in M_2(\mathbb{Z})$ and a digit set  $\mathscr{D}=\{(0,0)^t,(1,0)^t,(0,1)^t\}$, let $\mu_{\widetilde{M},\mathscr{D}}$ be defined by \eqref{eq(1.1)}. If $\det(\widetilde{M})\notin3\mathbb{Z}$, then there exist at most 9 mutually orthogonal exponential functions in $L^2(\mu_{\widetilde{M},\mathscr{D}})$, and the number 9 is the best.

\end{lemma}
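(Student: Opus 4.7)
The plan is to exploit the explicit structure of the mask polynomial for $\mathscr{D}$. A direct calculation gives
$$m_\mathscr{D}(x_1,x_2) = \tfrac{1}{3}\bigl(1 + e^{2\pi i x_1} + e^{2\pi i x_2}\bigr),$$
which vanishes precisely when $e^{2\pi i x_1}$ and $e^{2\pi i x_2}$ are the two primitive cube roots of unity, so $\mathcal{Z}(m_\mathscr{D}) = \{(1/3,2/3)^t,(2/3,1/3)^t\} + \mathbb{Z}^2$. By the product formula \eqref{eq(2.3)}, $\mathcal{Z}(\hat{\mu}_{\widetilde{M},\mathscr{D}}) = \bigcup_{j\ge 1} \widetilde{M}^{*j}\mathcal{Z}(m_\mathscr{D})$, and since $\gcd(\det \widetilde{M},3) = 1$, the matrix $\widetilde{M}^{*j}$ is invertible modulo $3$ for every $j \ge 1$, so $\widetilde{M}^{*j}(1,2)^t$ and $\widetilde{M}^{*j}(2,1)^t$ remain nonzero mod $3$.

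The main step is to pass to the quotient $\pi:\mathbb{R}^2 \to \mathbb{R}^2/\mathbb{Z}^2$. Writing a general element of $\mathcal{Z}(\hat{\mu}_{\widetilde{M},\mathscr{D}})$ as $\widetilde{M}^{*j}v/3 + \widetilde{M}^{*j}k$ with $v \in \{(1,2)^t,(2,1)^t\}$ and $k \in \mathbb{Z}^2$, and noting $\widetilde{M}^{*j}k \in \mathbb{Z}^2$, one obtains $\pi(\mathcal{Z}(\hat{\mu}_{\widetilde{M},\mathscr{D}})) \subset \tfrac{1}{3}((\mathbb{Z}/3\mathbb{Z})^2 \setminus \{0\})$. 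In particular, no nonzero integer vector lies in $\mathcal{Z}(\hat{\mu}_{\widetilde{M},\mathscr{D}})$, so $\pi$ is injective on any orthogonal set $\Lambda$ by \eqref{eq(2.2)}. After translating so that $0 \in \Lambda$, the differences force $\pi(\Lambda) \subset \tfrac{1}{3}(\mathbb{Z}/3\mathbb{Z})^2$, a set of cardinality $9$, yielding $|\Lambda| = |\pi(\Lambda)| \le 9$.

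For sharpness, I would exhibit a single expanding integer matrix attaining the bound. Taking for instance $\widetilde{M}$ with rows $(3,1)$ and $(1,1)$ (eigenvalues $2 \pm \sqrt{2}$, determinant $2$), one checks that the $\widetilde{M}^*$-orbit of $(1,2)^t$ in $(\mathbb{Z}/3\mathbb{Z})^2 \setminus \{0\}$ has length $8$ and hence covers all $8$ nonzero residues; this allows each nonzero difference of points in $\Lambda := \tfrac{1}{3}\{0,1,2\}^2$ to be realized explicitly in $\widetilde{M}^{*j}\mathcal{Z}(m_\mathscr{D})$ for an appropriate $j$, producing $9$ mutually orthogonal exponentials. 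The upper bound is a clean quotient computation once $\mathcal{Z}(m_\mathscr{D})$ is identified; the main obstacle is the sharpness part, which requires both locating an integer matrix with a full $\widetilde{M}^*$-orbit on $(\mathbb{Z}/3\mathbb{Z})^2 \setminus \{0\}$ and then lifting each mod-$3$ congruence to an honest equation of the form $\lambda - \lambda' = \widetilde{M}^{*j}(v/3 + k)$ with a concrete choice of $j, v, k$.
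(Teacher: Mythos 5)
The paper does not prove this lemma at all; it is quoted verbatim from \cite[Corollary 4.1]{Liu-Dong-Li_2017}, so the only fair comparison is with the machinery the paper builds for its own main theorems. Your upper-bound argument is correct and is essentially the paper's Lemma \ref{th(2.4)}: since $\mathcal{Z}(m_{\mathscr{D}})=\{(1/3,2/3)^t,(2/3,1/3)^t\}+\mathbb{Z}^2\subset\mathring{E}_3^2+\mathbb{Z}^2$ and $\widetilde{M}^*$ preserves $\mathring{E}_3^2+\mathbb{Z}^2$ when $\det(\widetilde{M})\notin 3\mathbb{Z}$ (Lemma \ref{th(2.5)}), the quotient map $\pi$ is injective on any orthogonal set and its image lies in a $9$-element set. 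No complaints there.

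The sharpness half has two genuine gaps. First, your witness matrix with rows $(3,1)$ and $(1,1)$ is not expanding: its eigenvalues are $2\pm\sqrt{2}$ and $2-\sqrt{2}<1$. This is repairable --- the paper's device is to multiply by a large integer $\bar{N}\equiv 1\pmod 3$ (see \eqref{eq(3.29)}, where $M_3^*=\bar{N}\begin{bmatrix}0&1\\1&1\end{bmatrix}$), which makes the matrix expanding without changing its order in $GL(2,\mathbb{F}_3)$ --- but as written your example does not satisfy the hypotheses of the lemma. Second, and more seriously, covering all eight nonzero residues of $(\mathbb{Z}/3\mathbb{Z})^2$ by the orbit of $(1,2)^t$ does \emph{not} by itself let you take $\Lambda=\tfrac{1}{3}\{0,1,2\}^2$: the actual zero set is $\widetilde{M}^{*j}\bigl(\tfrac{1}{3}v+\mathbb{Z}^2\bigr)=\tfrac{1}{3}\widetilde{M}^{*j}v+\widetilde{M}^{*j}\mathbb{Z}^2$, and $\widetilde{M}^{*j}\mathbb{Z}^2$ is a proper sublattice of index $|\det\widetilde{M}|^j$, so most representatives of the correct residue class mod $\mathbb{Z}^2$ are \emph{not} zeros of $\hat{\mu}$. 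You name this as ``the main obstacle'' but do not resolve it, and resolving it is the actual content of the sharpness proof: the paper's Lemma \ref{th(3.6)} gives $\widetilde{M}^{*j}(\lambda+\mathbb{Z}^2)\supset L^{\varphi(3)j}(\widetilde{M}^{*j}\lambda+\mathbb{Z}^2)$ with $L=\det\widetilde{M}$, and one must then dilate the candidate spectrum to $\Lambda=L^{\varphi(3)\cdot 8}\cdot\tfrac{1}{3}\{0,1,2\}^2$ (using Euler's theorem, $L^{\varphi(3)}\equiv 1\pmod 3$, so the dilation does not disturb the residues mod $\mathbb{Z}^2$) before the difference set lands inside $\mathcal{Z}(\hat{\mu}_{\widetilde{M},\mathscr{D}})$. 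Without this rescaling step the orthogonality of your nine exponentials is unproven.
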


In \cite[Theorem 3.1]{Dutkay-Jorgensen_2007},  Dutkay and Jorgensen established a criterion for the  non-spectrality of
self-affine measures $\mu_{M,D}$, which require that the elements of matrix $M$ and digit set $D$ are all integers.  The following lemma is a little generalization and  can be proved similarly.

\begin{lemma}\label{th(2.4)}\;\; For a $n\times n$ expanding integer matrix $M$ and a finite digit set $D\subset \mathbb{R}^n$, let
 $\mu_{M,D}$, $\mathcal{Z}(m_D)$ be defined by \eqref{eq(1.1)} and \eqref{eq(1.4)}, respectively. If there exists a set $\emptyset\neq\mathrm{Z}'\subset [0, 1)^n$ with finite cardinality $|\mathrm{Z}'|$, which does not
contain $0$, such that $\mathcal{Z}(m_D)\subset\mathrm{Z}'+\mathbb{Z}^n$ and
$$
M^{*}(\mathrm{Z}'+\mathbb{Z}^n)\subset \mathrm{Z}'+\mathbb{Z}^n ,
$$
then there exist at most $|\mathrm{Z}'|+1$ mutually orthogonal exponential functions in $L^2(\mu_{M,D})$. In particular, $\mu_{M,D}$ is not a spectral measure.
\end{lemma}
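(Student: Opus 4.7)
The plan is to push the one-step invariance of $\mathrm{Z}'+\mathbb{Z}^n$ forward to the whole zero set of $\hat\mu_{M,D}$, and then extract a bound on orthogonal sets via a reduction-mod-$\mathbb{Z}^n$ injectivity argument, following the template of Theorem~3.1 in \cite{Dutkay-Jorgensen_2007}.

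First I would invoke the representation \eqref{eq(2.3)}, namely $\mathcal{Z}(\hat\mu_{M,D})=\bigcup_{j\geq 1}M^{*j}(\mathcal{Z}(m_D))$, to reduce the question to controlling each iterate $M^{*j}(\mathcal{Z}(m_D))$. Because $M\in M_n(\mathbb{Z})$, the transpose $M^*$ preserves $\mathbb{Z}^n$; combined with the hypothesis $\mathcal{Z}(m_D)\subset \mathrm{Z}'+\mathbb{Z}^n$ and the one-step invariance $M^*(\mathrm{Z}'+\mathbb{Z}^n)\subset \mathrm{Z}'+\mathbb{Z}^n$, a straightforward induction on $j$ yields $M^{*j}(\mathcal{Z}(m_D))\subset \mathrm{Z}'+\mathbb{Z}^n$ for every $j\geq 1$. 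Taking the union gives the key inclusion $\mathcal{Z}(\hat\mu_{M,D})\subset \mathrm{Z}'+\mathbb{Z}^n$.

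Next, suppose $\mathcal{E}_\Lambda$ is a mutually orthogonal family of exponentials in $L^2(\mu_{M,D})$ and pick any reference point $\lambda_0\in\Lambda$. By \eqref{eq(2.2)} and the inclusion just proved, every $\lambda\in\Lambda\setminus\{\lambda_0\}$ satisfies $\lambda-\lambda_0\in \mathrm{Z}'+\mathbb{Z}^n$, so the reduction map $\tau\colon \Lambda\setminus\{\lambda_0\}\to \mathrm{Z}'$ sending $\lambda$ to the unique $\mathrm{Z}'$-representative of $\lambda-\lambda_0\pmod{\mathbb{Z}^n}$ is well defined. The claim is that $\tau$ is injective: if $\tau(\lambda)=\tau(\lambda')$ for distinct $\lambda,\lambda'\in\Lambda\setminus\{\lambda_0\}$, then $\lambda-\lambda'\in\mathbb{Z}^n$, while \eqref{eq(2.2)} also gives $\lambda-\lambda'\in \mathrm{Z}'+\mathbb{Z}^n$; writing $\lambda-\lambda'=z+k$ with $z\in \mathrm{Z}'$ and $k\in\mathbb{Z}^n$ then forces $z\in \mathbb{Z}^n\cap[0,1)^n=\{0\}$, contradicting $0\notin\mathrm{Z}'$. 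Hence $|\Lambda\setminus\{\lambda_0\}|\leq|\mathrm{Z}'|$ and so $|\Lambda|\leq|\mathrm{Z}'|+1$.

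For the final clause, the attractor of the IFS $\{\phi_d\}_{d\in D}$ is an infinite (in fact uncountable) set whenever $|D|\geq 2$, so $L^2(\mu_{M,D})$ is infinite-dimensional and cannot admit a finite orthonormal basis of exponentials; the cardinality bound just proved then rules out spectrality. I do not anticipate a serious obstacle in the argument---the only point to monitor is the integrality of $M$, which is precisely what promotes the single-step invariance to invariance under every power $M^{*j}$ and keeps the reduction-mod-$\mathbb{Z}^n$ step coherent.
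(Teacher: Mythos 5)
Your argument is correct and follows essentially the same route as the paper: both propagate the hypothesis $M^{*}(\mathrm{Z}'+\mathbb{Z}^n)\subset \mathrm{Z}'+\mathbb{Z}^n$ through the representation \eqref{eq(2.3)} to conclude that all difference vectors of an orthogonal set lie in $\mathrm{Z}'+\mathbb{Z}^n$, and then bound the cardinality by a reduction-mod-$\mathbb{Z}^n$ argument (your injectivity of $\tau$ is just the contrapositive of the paper's pigeonhole step, with the same contradiction from $0\notin\mathrm{Z}'$). No substantive difference.
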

\begin{proof}Suppose there exists a family of mutually orthogonal exponential functions $\{e^{2\pi i\langle\lambda,x\rangle}:\lambda\in\Lambda\}$ with $\mid\Lambda\mid > |\mathrm{Z}'|+1$. By taking some $\lambda_0\in\Lambda$ and replacing $\Lambda$ by $\Lambda-\lambda_0$, we may assume that $0\in\Lambda$. For any $\lambda_1\ne\lambda_2\in\Lambda$, the orthogonality implies that $\hat{\mu}_{M,D}(\lambda_1-\lambda_2)=0$. By \eqref{eq(2.3)}, there exists an integer $k\ge 1$ such that $M^{*-k}(\lambda_1-\lambda_2)\in \mathcal{Z}(m_D)\subset  \mathrm{Z}'+\mathbb{Z}^n$. By the hypothesis, we get  $\lambda_1-\lambda_2\in M^{*k}(\mathcal{Z}(m_D))\subset  M^{*k}(\mathrm{Z}'+\mathbb{Z}^n)\subset\mathrm{Z}'+\mathbb{Z}^n$, and hence
\begin{equation}\label{eq(2.5)}
\Lambda\setminus \{0\}\subset (\Lambda-\Lambda)\setminus \{0\}\subset\mathrm{Z}'+\mathbb{Z}^n .
\end{equation}
 Since $\mid\Lambda\mid > |\mathrm{Z}'|+1$, there exist $\lambda_1\ne\lambda_2\in\Lambda$ such that $\lambda_1-\lambda_2\in \mathbb{Z}^n$ by using pigeonhole principle. But this will contradict \eqref{eq(2.5)}, because $0\notin \mathrm{Z}'$.
\end{proof}

\begin{lemma}\label{th(2.5)}{\rm \cite[Proposition 2.2]{Liu-Dong-Li_2017}}\;\;
For an integer $q\geq 2$, let $\mathring{E}_q^2$ be
defined by \eqref{eq(1.2)} and  $M\in M_2(\mathbb{Z})$ with
$\gcd(\det(M),q)=1$. Then $M(q \mathring{E}_q^2)=
q\mathring{E}_q^2\;({\rm mod} \ q\Bbb Z^2)$, equivalently, $M(
\mathring{E}_q^2)= \mathring{E}_q^2\;({\rm mod} \ \Bbb Z^2)$.
\end{lemma}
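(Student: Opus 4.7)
My plan is to reduce the claim to the statement that, when $\gcd(\det M, q)=1$, left multiplication by $M$ induces a bijection on the abelian group $(\mathbb{Z}/q\mathbb{Z})^2$ that fixes $0$. The two formulations in the lemma are trivially equivalent: rescaling by $q$ turns $\mathring{E}_q^2$ into $\{0,1,\dots,q-1\}^2\setminus\{0\}$, which is a set of representatives for the nonzero residue classes of $\mathbb{Z}^2/q\mathbb{Z}^2$. So it is enough to prove the integer version:
\begin{equation*}
M\bigl(q\mathring{E}_q^2\bigr)\equiv q\mathring{E}_q^2\pmod{q\mathbb{Z}^2}.
\end{equation*}

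First I would note that $M\in M_2(\mathbb{Z})$ descends to a well-defined $\mathbb{Z}/q\mathbb{Z}$-linear endomorphism $\bar{M}:(\mathbb{Z}/q\mathbb{Z})^2\to(\mathbb{Z}/q\mathbb{Z})^2$, simply because $M(q\mathbb{Z}^2)\subset q\mathbb{Z}^2$. Next I would invoke the hypothesis $\gcd(\det M,q)=1$ to conclude that $\det M$ is a unit in $\mathbb{Z}/q\mathbb{Z}$; by the usual adjugate formula $M\cdot \operatorname{adj}(M)=\det(M)\,I$ applied modulo $q$, this produces an inverse for $\bar{M}$, so $\bar{M}$ is bijective on $(\mathbb{Z}/q\mathbb{Z})^2$. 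Since $\bar{M}(0)=0$, restricting to the nonzero classes gives a bijection
\begin{equation*}
\bar{M}:(\mathbb{Z}/q\mathbb{Z})^2\setminus\{0\}\longrightarrow(\mathbb{Z}/q\mathbb{Z})^2\setminus\{0\}.
\end{equation*}
Translating back to representatives in $\{0,1,\dots,q-1\}^2\setminus\{0\}=q\mathring{E}_q^2$ yields the asserted equality modulo $q\mathbb{Z}^2$, and dividing by $q$ delivers the second formulation $M(\mathring{E}_q^2)=\mathring{E}_q^2\pmod{\mathbb{Z}^2}$.

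There is no real obstacle here; the only subtlety worth spelling out is why the map fixes the zero class, which is immediate from linearity, and why surjectivity holds on a finite set without an appeal to $\bar{M}^{-1}$ one could equally argue from injectivity plus finiteness via the pigeonhole principle. Because the proof is essentially a one-line consequence of the invertibility of $M$ modulo $q$, I would keep the write-up short and, as in the cited reference \cite{Liu-Dong-Li_2017}, simply point to this adjugate/determinant argument rather than belabor it.
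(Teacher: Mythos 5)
Your proof is correct; the paper does not reproduce an argument for this lemma but simply cites it as \cite[Proposition 2.2]{Liu-Dong-Li_2017}, and your adjugate argument showing that $\bar{M}$ is invertible on $(\mathbb{Z}/q\mathbb{Z})^2$ and hence permutes the nonzero residue classes is exactly the standard proof one would expect there.
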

\begin{re}\label{th(2.6)}
Let $p$, $s\geq 2$ be integers and $M\in M_2(\mathbb{Z})$ with
$\gcd(\det(M),p)=1$. By Lemma \ref{th(2.5)}, we have $M(
\mathring{E}_{p^s}^2)= \mathring{E}_{p^s}^2\;({\rm mod} \ \Bbb Z^2)$ and $M(\mathring{E}_{p^{s-1}}^2)= \mathring{E}_{p^{s-1}}^2\;({\rm mod} \ \Bbb Z^2)$. This implies that $M(\mathring{E}_{p^s}^2\setminus\mathring{E}_{p^{s-1}}^2 )= \mathring{E}_{p^s}^2\setminus\mathring{E}_{p^{s-1}}^2\;({\rm mod} \ \Bbb Z^2)$. Therefore, for any $\xi=\frac{1}{p^s}(l_1,l_2)^t\in\mathring{E}_{p^s}^2$ with $p\nmid\gcd(l_1,l_2)$, $M\xi=
\frac{1}{p^s}(l'_1,l'_2)^t$ must satisfy $p\nmid\gcd(l'_1,l'_2)$.
\end{re}\label{th(3.2)}
For a positive number $m$, let $\varphi(m)$ denote the \emph{Euler's phi function} which  equal to the number of integers in the set $\{1,2,\cdots, m-1\}$ that are relatively prime to $m$.  For more information about the Euler's phi function, the reader can refer to \cite{Nathanson_1996}. The following lemma is the famous \emph{Euler's theorem}.

\begin{lemma}\cite[Theorem 2.12]{Nathanson_1996}\label{th(2.7)}\;\; Let $m$ be a positive integer, and let $N$ be an integer
relatively prime to $m$. Then $N^{\varphi(m)}=1\;({\rm mod}\;m).$
\end{lemma}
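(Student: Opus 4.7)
The plan is to prove Euler's congruence by the classical ``multiplying through a reduced residue system'' argument. The key observation is that multiplication by an integer $N$ coprime to $m$ induces a permutation of the residue classes modulo $m$ that are coprime to $m$, and then taking the product of these residues will extract the factor $N^{\varphi(m)}$ on one side.

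First, I would enumerate the reduced residue system $R = \{a_1, a_2, \ldots, a_{\varphi(m)}\}$, namely the set of integers in $\{1, 2, \ldots, m\}$ that are coprime to $m$. By the definition of the Euler phi function, $|R| = \varphi(m)$. Next, I would verify that the assignment $a_i \mapsto N a_i \pmod m$ defines a permutation of $R$ (as a set of residue classes). Since $\gcd(N, m) = 1$ and $\gcd(a_i, m) = 1$, the product $N a_i$ is again coprime to $m$, hence congruent mod $m$ to a unique element of $R$. Injectivity follows from the existence of an inverse of $N$ modulo $m$ (by B\'ezout's identity), so that $N a_i \equiv N a_j \pmod m$ forces $a_i \equiv a_j \pmod m$.

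Having established this permutation, I would take the product over both collections:
\begin{equation*}
\prod_{i=1}^{\varphi(m)} (N a_i) \;\equiv\; \prod_{i=1}^{\varphi(m)} a_i \pmod m,
\end{equation*}
which rearranges to
\begin{equation*}
N^{\varphi(m)} \cdot \prod_{i=1}^{\varphi(m)} a_i \;\equiv\; \prod_{i=1}^{\varphi(m)} a_i \pmod m.
\end{equation*}
Since each $a_i$ is coprime to $m$, so is the full product, and thus it admits a multiplicative inverse modulo $m$. Cancelling this common factor yields the desired identity $N^{\varphi(m)} \equiv 1 \pmod m$.

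The only nontrivial ingredient in the above is the existence of multiplicative inverses modulo $m$ for units, which is essentially B\'ezout's identity; the remainder of the argument is pure bookkeeping, so there is no genuine obstacle. One could alternatively package the proof as a one-line application of Lagrange's theorem to the finite abelian group of units $(\mathbb{Z}/m\mathbb{Z})^*$, whose order is $\varphi(m)$; this makes the permutation argument transparent at the group-theoretic level but contributes no new content.
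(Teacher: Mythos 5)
Your proof is correct and is the standard reduced-residue-system argument; the paper itself gives no proof, simply citing Nathanson's Theorem 2.12, and your argument is essentially the one found in that reference. Nothing further is needed.
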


For a prime $p$, let $\mathbb{F}_p:=\mathbb{Z}/p\mathbb{Z}$ denote  the residue class fields and $\mathbb{F}^n_p$ denote the vector space of dimension $n$ over $\mathbb{F}_p$. All nonsingular $n\times n$ matrices over $\mathbb{F}_p$ form a finite group under matrix multiplication,
called the \emph{general linear group} $GL(n,\mathbb{F}_p)$.

\begin{de}\label{de(2.8)}
Let $f(x)\in \mathbb{F}_p[x]$ be a nonzero polynomial. If $f(0)\neq0$, then
the least positive integer $n$ for which $f(x)$ divides $x^n-1$ is called the order of
$f$ and denoted by $ord_p(f)$.
\end{de}

The order of the polynomial $f$ is sometimes also called the period of $f$ or the exponent of $f$. There are many conclusions about the order of polynomial in the third chapter of \cite{Lidel-Niederreiter_1994}.
\begin{de}\label{de(2.9)}
Let $M\in GL(n,\mathbb{F}_p)$, then the least positive integer $e$ for which $M^e=I$ is called the order of
$M$ and denoted by $O_p(M)$, where $I$ is the  identity matrix in $GL(n,\mathbb{F}_p)$.
\end{de}

The following lemma reflects the relationship between the order of the matrix $M$ and the order of the characteristic polynomial of $M$.
\begin{lemma}\cite[Proposition 3.1]{Ghorpade-Hasan-Kumari_2011}\label{th(2.10)}
Let $M\in GL(n,\mathbb{F}_p)$ and let $f(x)\in \mathbb{F}_p[x]$ be the minimal polynomial of $M$ and $\chi(x)\in \mathbb{F}_p[x]$ be the characteristic polynomial of $M$. Then $f(0)\neq 0$
and $O_p(M) = ord_p(f)$. In particular, $O_p(M)\leq p^n-1$, and moreover, if the equality
holds, then $f(x)=\chi(x)$. Also, we have:
$O_p(M)= p^n-1$ $\Longleftrightarrow $  $f(x)$ is primitive of degree $n$ $\Longleftrightarrow $ $\chi(x)$ is primitive.
\end{lemma}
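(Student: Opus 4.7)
The plan is to establish the four assertions in order: the invertibility statement $f(0)\ne 0$, the identity $O_p(M)=\operatorname{ord}_p(f)$, the upper bound $p^n-1$ together with the case of equality, and finally the three-way equivalence involving primitivity.

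First I would note that $f(0)\ne 0$ is immediate: since $M\in GL(n,\mathbb{F}_p)$, the scalar $0$ is not an eigenvalue of $M$, so $x\nmid f(x)$. For the identity $O_p(M)=\operatorname{ord}_p(f)$, I would argue by two divisibilities. If $e=O_p(M)$, then $M$ annihilates the polynomial $x^e-1$, so by the minimality of $f$ we get $f(x)\mid x^e-1$, hence $\operatorname{ord}_p(f)\le e$. Conversely, if $r=\operatorname{ord}_p(f)$, then $f(x)\mid x^r-1$, and since $f(M)=0$ this forces $M^r=I$, so $O_p(M)\mid r$. Combining the two gives equality.

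Next, for the bound $O_p(M)\le p^n-1$, the cleanest route goes through the splitting field. Writing $f=f_1^{a_1}\cdots f_k^{a_k}$ into distinct monic irreducible factors of degrees $d_i$, each root of $f_i$ lies in $\mathbb{F}_{p^{d_i}}$ and, being nonzero, has multiplicative order dividing $p^{d_i}-1$. A short lemma (which I would establish by comparing $f$ with $(x^{p^d-1}-1)$ and using squarefree reduction, noting that the $a_i\ge 2$ case only forces an extra factor of $p$, which does not matter here since $p\nmid p^{d_i}-1$) gives $\operatorname{ord}_p(f_i^{a_i})\mid p^{d_i}-1$ times a power of $p$; taking the LCM across factors and using $\sum a_i d_i=\deg(f)\le n$ yields $\operatorname{ord}_p(f)\le p^{\deg(f)}-1\le p^n-1$. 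The main obstacle is this irreducible-power bookkeeping; I would probably just cite the standard fact from Lidl–Niederreiter Chapter 3 rather than redo it. For the equality case, if $O_p(M)=p^n-1$, then from $\operatorname{ord}_p(f)\le p^{\deg f}-1$ we infer $\deg(f)\ge n$; since $f\mid\chi$ and $\deg(\chi)=n$, both are monic of the same degree, hence $f=\chi$.

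Finally, for the three-way equivalence, I would recall that a polynomial of degree $n$ over $\mathbb{F}_p$ is \emph{primitive} iff it is the minimal polynomial over $\mathbb{F}_p$ of a generator of $\mathbb{F}_{p^n}^*$, equivalently iff it is irreducible of degree $n$ and some (hence every) root has multiplicative order exactly $p^n-1$. Thus $\operatorname{ord}_p(f)=p^n-1$ forces $f$ irreducible of degree $n$ with a root of maximal order, i.e.\ $f$ primitive of degree $n$; the converse is immediate. Combining with the equality case of the bound, $\chi$ primitive $\Longleftrightarrow$ $f=\chi$ primitive of degree $n$ $\Longleftrightarrow$ $O_p(M)=p^n-1$, completing the chain. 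The whole argument is essentially linear algebra over $\mathbb{F}_p$ plus standard finite-field order facts; the only nontrivial ingredient is the order bound for $\operatorname{ord}_p(f)$, which is where I would lean most heavily on the cited literature.
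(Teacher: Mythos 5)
The paper does not prove this lemma at all: it is quoted verbatim, with citation, as Proposition 3.1 of Ghorpade--Hasan--Kumari, so there is no in-paper argument to compare yours against. Judged on its own, your proof is correct and is essentially the standard one. The two divisibility arguments giving $O_p(M)=ord_p(f)$ are clean and complete, the deduction $f=\chi$ in the equality case from $\deg f\geq n$, $f\mid\chi$ and monicity is right, and the final chain of equivalences goes through once one knows that a degree-$n$ polynomial $f$ with $f(0)\neq 0$ is primitive if and only if $ord_p(f)=p^n-1$ (Lidl--Niederreiter, Theorem 3.16), together with the observation that an irreducible $\chi$ forces $f=\chi$. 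The only place where your wording is looser than the underlying fact is the bound $ord_p(f)\leq p^{\deg f}-1$ for non-squarefree $f$: the multiplicity $a_i\geq 2$ contributes a factor $p^t$ with $p^t\geq\max_i a_i$, not merely ``an extra factor of $p$''; the inequality still holds because $p^{t}\leq p^{\max_i a_i-1}$ and $\sum_i a_i d_i\geq\sum_i d_i+\max_i a_i-1$, but as written that step is hand-waved. Since you explicitly defer to the standard order formula in Lidl--Niederreiter Chapter 3 at exactly that point -- which is also what the cited source effectively does -- this is a presentational gap, not a mathematical one.
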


It is well known that there exist $\varphi(p^n-1)/n$ primitive polynomials with degree $n$ over $\mathbb{F}_p$(see $P_{87}$
Theorem 4.1.3 of \cite{Mullen-Panario_2013} ), where $\varphi$ is the  Euler's phi function. Consequently, Lemma \ref{th(2.10)} implies that the matrix $M\in GL(n,\mathbb{F}_p)$ with
$O_p(M)= p^n-1$ always exists.

\begin{de}\label{de(2.11)}
We call $M\in GL(n,\mathbb{F}_p)$ an ergodic matrix if $\{Mv,M^2v,\cdots,M^{p^n-1}v\}=p\mathring{E}_p^n\;({\rm mod} \ p\Bbb Z^n)$ for any $v\in \mathbb{F}^n_p\setminus \{0\}$.
\end{de}

The  ergodic matrices have been widely used in Cryptography. The following lemma shows that the ergodic matrices attain to the maximum order of matrices in
$GL(n,\mathbb{F}_p)$.

\begin{lemma}\cite[Lemma 3.3]{Zhao-Zhao-Pei_2012}\label{th(2.12)}
A matrix $M\in GL(n,\mathbb{F}_p)$ is ergodic if and only if $O_p(M)= p^n-1$.
\end{lemma}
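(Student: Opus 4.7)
The plan is to identify $p\mathring{E}_p^n \pmod{p\mathbb{Z}^n}$ with the nonzero vectors in $\mathbb{F}_p^n$, so that ergodicity is the statement that for every $v\neq 0$ the orbit $\{M^k v:1\le k\le p^n-1\}$ exhausts $\mathbb{F}_p^n\setminus\{0\}$. The proof then naturally splits into a combinatorial forward direction and an algebraic reverse direction that exploits Lemma \ref{th(2.10)}.

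For the forward direction, assume $M$ is ergodic. Fix any $v\ne 0$; the orbit listed in Definition \ref{de(2.11)} has $p^n-1$ distinct entries and covers $v$ itself, so the least $k\ge 1$ with $M^k v=v$ is exactly $k=p^n-1$. Applying this to each standard basis vector $e_1,\dots,e_n$ yields $M^{p^n-1}=I$. Conversely, any relation $M^k=I$ with $1\le k<p^n-1$ would force every orbit to have length $\le k$, contradicting ergodicity. Hence $O_p(M)=p^n-1$.

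For the reverse direction, assume $O_p(M)=p^n-1$. By Lemma \ref{th(2.10)}, the minimal polynomial $f(x)$ of $M$ equals the characteristic polynomial $\chi(x)$, and $\chi(x)$ is primitive of degree $n$; in particular $\chi(x)$ is irreducible over $\mathbb{F}_p$, so the commutative ring $\mathbb{F}_p[M]\cong \mathbb{F}_p[x]/(\chi(x))$ is isomorphic to the field $\mathbb{F}_{p^n}$. View $\mathbb{F}_p^n$ as a module over $\mathbb{F}_p[M]$. For any fixed $v\ne 0$, the annihilator $\mathrm{Ann}(v)=\{g(M):g(M)v=0\}$ is a proper ideal of the field $\mathbb{F}_p[M]$, hence equals $\{0\}$. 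Thus the evaluation map $g(M)\mapsto g(M)v$ from $\mathbb{F}_p[M]$ to $\mathbb{F}_p^n$ is an $\mathbb{F}_p$-linear injection between spaces of equal dimension $n$, hence an $\mathbb{F}_p[M]$-linear bijection. Since $\chi$ is primitive, $M$ is a generator of the cyclic group $\mathbb{F}_p[M]^\times\cong \mathbb{F}_{p^n}^\times$, so $\{M,M^2,\dots,M^{p^n-1}\}=\mathbb{F}_p[M]^\times$. Transporting this equality along the bijection yields $\{Mv,M^2v,\dots,M^{p^n-1}v\}=\mathbb{F}_p^n\setminus\{0\}$, which is the required ergodicity.

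The routine part is the forward direction, which is purely a pigeonhole and orbit-length counting argument. The main obstacle is proving, in the reverse direction, that \emph{every} nonzero $v$ (not merely some cyclic vector) has a full $M$-orbit of length $p^n-1$; this is the step that genuinely needs the hypothesis $f=\chi$ from Lemma \ref{th(2.10)}, which forces $\mathbb{F}_p[M]$ to be a field and thereby kills all nontrivial annihilators of vectors. Once this cyclicity is secured, the ergodicity reduces cleanly to the primitivity of $M$ in $\mathbb{F}_{p^n}^\times$.
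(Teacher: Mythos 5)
The paper never proves this lemma---it is imported verbatim from the reference of Zhao, Zhao and Pei, so there is no internal argument to compare yours against; what you have written supplies the missing proof, and it is correct. The forward direction is the routine count you describe: under the identification of $p\mathring{E}_p^n \pmod{p\mathbb{Z}^n}$ with $\mathbb{F}_p^n\setminus\{0\}$, ergodicity forces the $p^n-1$ vectors $Mv,\dots,M^{p^n-1}v$ to be pairwise distinct and to contain $v$, whence $M^{p^n-1}v=v$ with no earlier return, and running $v$ over a basis gives $O_p(M)=p^n-1$. The reverse direction is where the content lies, and you have isolated the right mechanism: Lemma \ref{th(2.10)} gives $f=\chi$ primitive, hence irreducible, hence $\mathbb{F}_p[M]\cong\mathbb{F}_p[x]/(\chi)\cong\mathbb{F}_{p^n}$ is a field; the annihilator of any $v\neq 0$ is then a proper ideal of a field and so vanishes, making $g(M)\mapsto g(M)v$ an injection between $\mathbb{F}_p$-spaces of equal dimension $n$ and therefore a bijection; and since $M$ has order $p^n-1=\left|\mathbb{F}_p[M]^{\times}\right|$ it generates that cyclic group, so its powers sweep out $\mathbb{F}_p[M]\setminus\{0\}$ and, transported by the bijection, all of $\mathbb{F}_p^n\setminus\{0\}$. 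The only step you leave tacit is that a primitive polynomial of degree $n$ is by definition the minimal polynomial of a generator of $\mathbb{F}_{p^n}^{\times}$ and is therefore irreducible; you need this to pass from ``$\chi$ primitive'' to ``$\mathbb{F}_p[M]$ is a field,'' and it deserves one explicit sentence. With that remark added, the proof is complete and correctly pinpoints the genuinely nontrivial point, namely that \emph{every} nonzero vector is cyclic for $M$.
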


\section{\bf Main Results\label{sect.3}}

In this section, we first  prove Theorem \ref{th(1.2)}, and then  prove Theorem \ref{th(1.1)} by using the result of Theorem \ref{th(1.2)}. In the proof of Theorem \ref{th(1.2)}, the ``at most" is  easy to get by Lemma \ref{th(2.4)}, the main difficulty is to show that the number $p^{2{\bar{\eta}}}$ is the best. In order to get this, we will prove that there exists an expanding integer matrix $M$ with $\gcd(\det(M), p)=1$ such that $\bigcup_{j=1}^{(p^2-1)p^{{\bar{\eta}}-1}}{M^*}^j\mathcal {A}_p({\bar{\eta}})=\mathring{E}_{p^{\bar{\eta}}}^2$, where $\mathring{E}_{p^{\bar{\eta}}}^2$,~$\mathcal {A}_p({\bar{\eta}})$ are defined by \eqref{eq(1.2)} and \eqref{eq(1.3)}, respectively.
For simplicity, in the later of this paper,  we let $I\in M_2(\mathbb{Z})$ denote the identity matrix.

\begin{thm} \label{th(3.1)}
For a prime $p\geq 3$ and an integer $s\geq 1$, let $B=p\begin{bmatrix}
l_{1} & l_{2}\\
l_{3} & l_{4}
\end{bmatrix}+I$  be an integer matrix with $l_{i_0}\notin p\mathbb{Z}$ for some $1\leq i_0\leq 4$. Suppose $e$ is the least integer such that
$B^e=p\begin{bmatrix}
\tilde{l}_{1} & \tilde{l}_{2}\\
\tilde{l}_{3} & \tilde{l}_{4}
\end{bmatrix}+I$ satisfies $\tilde{l}_{i_0}\in p^{s-1}\mathbb{Z}$, then $e=p^{s-1}$.  Meanwhile,
$B^{p^{s-1}}=p^s\begin{bmatrix}
\check{l}_{1} & \check{l}_{2}\\
\check{l}_{3} & \check{l}_{4}
\end{bmatrix}+I$  with  $\check{l}_{i}=l_i\;({\rm mod}\ p)$ for all $1\leq i\leq 4$.
\end{thm}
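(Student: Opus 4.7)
The plan is to compute $B^{n}=(I+pL)^{n}$ via the binomial theorem and to keep careful track of the $p$-adic valuation of each matrix entry, where $L$ denotes the integer matrix whose entries are $l_1,l_2,l_3,l_4$ (so $B=I+pL$). The key arithmetic input is the classical identity $v_p\!\binom{p^{a}}{k} = a - v_p(k)$ for $1\leq k\leq p^{a}$, which follows from $\binom{p^{a}}{k} = \tfrac{p^{a}}{k}\binom{p^{a}-1}{k-1}$ together with Lucas's theorem, since every base-$p$ digit of $p^{a}-1$ equals $p-1$ and hence $\binom{p^{a}-1}{k-1}$ is coprime to $p$.

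First I would prove the second assertion. Expanding $B^{p^{a}}=\sum_{k=0}^{p^{a}}\binom{p^{a}}{k}p^{k}L^{k}$, the $k=1$ term contributes $p^{a+1}L$, while each entry of the $k$-th term ($k\geq 2$) has $p$-adic valuation at least $a+(k-v_p(k))$. This is where the main difficulty lies: one must check that $k-v_p(k)\geq 2$ for every $k\geq 2$, and the hypothesis $p\geq 3$ is essential (the bound already fails at $k=p=2$). A short case split on whether $v_p(k)=0$ or $v_p(k)\geq 1$ disposes of this for $p\geq 3$. Consequently $B^{p^{a}}\equiv I+p^{a+1}L\pmod{p^{a+2}}$, so there is an integer matrix $L_{a}$ with $B^{p^{a}}=I+p^{a+1}L_{a}$ and $L_{a}\equiv L\pmod{p}$. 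Setting $a=s-1$ yields the desired formula $B^{p^{s-1}}=I+p^{s}\check{L}$ with $\check{L}\equiv L\pmod{p}$.

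For the minimality claim I would establish the sharper statement that, if $e=p^{a}m$ with $\gcd(m,p)=1$, then the $(i_0)$-entry of $B^{e}-I$ has $p$-adic valuation exactly $a+1$. Indeed $B^{e}=(B^{p^{a}})^{m}=(I+p^{a+1}L_{a})^{m}$ expands binomially as
\[
B^{e}-I \;=\; m\,p^{a+1}L_{a} \;+\; \sum_{k\geq 2}\binom{m}{k}p^{k(a+1)}L_{a}^{k},
\]
and every summand on the right has $v_p\geq 2(a+1)>a+1$. Since $(L_{a})_{i_0}\equiv l_{i_0}\not\equiv 0\pmod{p}$ and $\gcd(m,p)=1$, the leading contribution $mp^{a+1}(L_{a})_{i_0}$ has valuation exactly $a+1$, and this survives in the full sum. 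Hence the condition $\tilde{l}_{i_0}\in p^{s-1}\mathbb{Z}$, i.e.\ $v_p((B^{e}-I)_{i_0})\geq s$, is equivalent to $a+1\geq s$, i.e.\ $p^{s-1}\mid e$. The smallest positive such $e$ is therefore $p^{s-1}$, which proves the first assertion.
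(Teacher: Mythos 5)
Your proof is correct, and it takes a genuinely different route from the paper's. The paper proves the theorem by induction on $s$: it isolates the single divisibility fact $p\mid\binom{p}{2}$ (valid for odd $p$) to handle the step from $B^{p^{k-1}}=p^kA_k+I$ to $B^{p^k}$, and then establishes minimality by contradiction, writing a hypothetical smaller exponent as $n=\tau p^{k-1}+r$ and multiplying out $B^{\tau p^{k-1}}B^{r}$ entrywise to force $r\geq p^{k-1}$. You instead prove the sharper, exact statement that $v_p\bigl((B^{e}-I)_{i_0}\bigr)=v_p(e)+1$ for \emph{every} $e\geq 1$, obtained in one pass from the full valuation identity $v_p\binom{p^{a}}{k}=a-v_p(k)$ together with the observation that $k-v_p(k)\geq 2$ for all $k\geq 2$ when $p\geq 3$; both the formula for $B^{p^{s-1}}$ and the minimality of $e=p^{s-1}$ then drop out immediately, with no induction and no contradiction argument. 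Your approach buys a cleaner and stronger intermediate result (an exact valuation rather than a least-exponent characterization), and it localizes the role of the hypothesis $p\geq 3$ precisely to the inequality $k-v_p(k)\geq 2$, whose failure at $k=p=2$ is exactly the phenomenon behind the paper's Remark 3.2 counterexample; the paper's induction, on the other hand, stays closer to elementary congruence manipulations and avoids invoking Lucas's theorem (or Kummer's theorem) for the valuation of $\binom{p^{a}}{k}$.
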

\begin{proof}
 For any integer $m\geq 2$, write $B=pA+I$, we have
\begin{align}\label{eq(3.1)}
 B^{m}=(pA+I)^m=p^2\widetilde{A}_m+mpA+I,
\end{align}
where
\begin{align}\label{eq(3.2)}
\widetilde{A}_m=p^{m-2}A^m+C_p^1p^{m-3}A^{m-1}+\cdots+C_p^{p-3}pA^3+C_p^{p-2}A^2.
\end{align}

Now we prove the theorem by induction. Without loss of generality, we assume $l_2\notin p\mathbb{Z}$.

It is easy to see that the theorem holds for $s=1$. We then consider $s=2$. Since $p$ is a prime and $l_{2}\notin p\mathbb{Z}$, we infer from \eqref{eq(3.1)} that  $m=p$ is the least integer such that
$B^m=p\begin{bmatrix}
\tilde{l}^{(2)}_{1} & \tilde{l}^{(2)}_{2}\\
\tilde{l}^{(2)}_{3} & \tilde{l}^{(2)}_{4}
\end{bmatrix}+I$ satisfies $\tilde{l}^{(2)}_{2}\in p\mathbb{Z}$. Obviously,
\begin{align*}
 B^{p}=p^2\widetilde{A}_p+p^2A+I=p^2\begin{bmatrix}
\check{l}^{(2)}_{1} & \check{l}^{(2)}_{2}\\
\check{l}^{(2)}_{3} & \check{l}^{(2)}_{4}
\end{bmatrix}+I.
\end{align*}
In the following,  we prove $\check{l}^{(2)}_{i}=l_i\;({\rm mod}\ p)  \ (1\leq i\leq 4)$.
Note that $C_p^{p-2}=C_p^2=\frac{p(p-1)}{2}\in p\mathbb{Z}$ for any prime $p\geq 3$, we conclude from \eqref{eq(3.2)} that there exists an integer matrix $\widetilde{B}_p$ such that $\widetilde{A}_p=p\widetilde{B}_p$. Hence $B^{p}=p^2(p\widetilde{B}_p+A)+I$, and therefore $\check{l}^{(2)}_{i}=l_i\;({\rm mod}\ p) \ (1\leq i\leq 4)$. This proves the theorem for $s=2$.

 Inductively, we assume that the theorem holds for $s=k$. That is, $e=p^{k-1}$ is the least integer such that
$B^{e}=p\begin{bmatrix}
\tilde{l}^{(k)}_{1} & \tilde{l}^{(k)}_{2}\\
\tilde{l}^{(k)}_{3} & \tilde{l}^{(k)}_{4}
\end{bmatrix}+I$ satisfies $\tilde{l}^{(k)}_{2}\in p^{k-1}\mathbb{Z}$, moreover,
$$B^{p^{k-1}}=p^k\begin{bmatrix}
\check{l}^{(k)}_{1} & \check{l}^{(k)}_{2}\\
\check{l}^{(k)}_{3} & \check{l}^{(k)}_{4}
\end{bmatrix}+I:=p^kA_k+I
$$  with $\check{l}^{(k)}_{i}=l_i\;({\rm mod}\ p)  \ (1\leq i\leq 4)$ .

For $s=k+1$, by inductive hypothesis and the same discussion as $s=2$, we can easily show that
$$B^{p^{k}}=(B^{p^{k-1}})^p=(p^kA_k+I)^p=p^{k+1}\begin{bmatrix}
\check{l}^{(k+1)}_{1} & \check{l}^{(k+1)}_{2}\\
\check{l}^{(k+1)}_{3} & \check{l}^{(k+1)}_{4}
\end{bmatrix}+I
$$
with $\check{l}^{(k+1)}_{i}=\check{l}^{(k)}_{i}=l_i\;({\rm mod}\ p)  \ (1\leq i\leq 4)$.
We now prove that $e=p^k$ is the least integer such that
$B^{e}=p\begin{bmatrix}
\tilde{l}^{(k+1)}_{1} & \tilde{l}^{(k+1)}_{2}\\
\tilde{l}^{(k+1)}_{3} & \tilde{l}^{(k+1)}_{4}
\end{bmatrix}+I$ satisfies $\tilde{l}^{(k+1)}_{2}\in p^{k}\mathbb{Z}$.
Suppose that $n< p^k$ is the least integer which satisfies the above. By the assumption $\check{l}^{(k)}_{2}\notin p\mathbb{Z}$ for $s=k$,  we have $n> p^{k-1}$ and rewrite $n=\tau p^{k-1}+r$, where $1<\tau< p$ and $0\leq r< p^{k-1}$. It is easy to see that there exist integers $\ddot{l}_{1}$, $\ddot{l}_{2}$, $\ddot{l}_{3}$ and $\ddot{l}_{4}$ such that
$$B^{\tau p^{k-1}}=\left(p^kA_k+I\right)^\tau=p^{\tau k}A^\tau_k+\cdots+C^{2}_\tau p^{2k}A^{2}_k+C^{1}_\tau p^{k}A_k+I=p^k\begin{bmatrix}
\ddot{l}_{1} & \ddot{l}_{2}\\
\ddot{l}_{3} & \ddot{l}_{4}
\end{bmatrix}+I
$$ with $\ddot{l}_{i}=\tau\check{l}^{(k)}_{i}\;({\rm mod}\ p)  \ (1\leq i\leq 4)$. Especially $\ddot{l}_{2}\notin p\mathbb{Z}$, because $\check{l}^{(k)}_{2}\notin p\mathbb{Z}$ and $1<\tau< p$.
By using \eqref{eq(3.1)}, we can denote
$B^{r}=p\begin{bmatrix}
\tilde{m}_1 & \tilde{m}_2 \\
\tilde{m}_3 & \tilde{m}_4
\end{bmatrix}+I$ for some integers $\tilde{m}_1$, $\tilde{m}_2$, $\tilde{m}_3$ and $\tilde{m}_4$. Hence
\begin{eqnarray*} \nonumber
B^{n}&=&B^{\tau p^{k-1}+r}\\  \nonumber
&=&\begin{bmatrix}
p^{k+1}(\tilde{m}_1\ddot{l}_{1}+\tilde{m}_3\ddot{l}_{2})+p^k\ddot{l}_{1}+p\tilde{m}_1+1 & p^{k+1}(\tilde{m}_2\ddot{l}_{1}+\tilde{m}_4\ddot{l}_{2})+p^k\ddot{l}_{2}+p\tilde{m}_2 \\
p^{k+1}(\tilde{m}_1\ddot{l}_{3}+\tilde{m}_3\ddot{l}_{4})+p^k\ddot{l}_{3}+p\tilde{m}_3 & p^{k+1}(\tilde{m}_2\ddot{l}_{3}+\tilde{m}_4\ddot{l}_{4})+p^k\ddot{l}_{4}+p\tilde{m}_4+1
\end{bmatrix}\\ \nonumber
&=&p\begin{bmatrix}
\tilde{l}^{(k+1)}_{1} & \tilde{l}^{(k+1)}_{2}\\
\tilde{l}^{(k+1)}_{3} & \tilde{l}^{(k+1)}_{4}
\end{bmatrix}+I,
\end{eqnarray*}
where the above equation implies $\tilde{m}_{2}\in p^{k-1}\mathbb{Z}$ since $\tilde{l}^{(k+1)}_{2}\in p^{k}\mathbb{Z}$. So $r\geq p^{k-1}$ by the inductive hypothesis for $s=k$, this contradicts with $r< p^{k-1}$.  Therefore,  the theorem  holds for $s=k+1$. This completes the proof of Theorem \ref{th(3.1)}.
\end{proof}

\begin{re}\label{th(3.2)}
Theorem \ref{th(3.1)} may not hold if $p=2$. For example, let $B=2\begin{bmatrix}
l_1 & l_2\\
l_3 & l_4
\end{bmatrix}+I=2\begin{bmatrix}
1 & 1\\
1 & 0
\end{bmatrix}+I$, then $l_2\notin 2\mathbb{Z}$ and $B^2=2\begin{bmatrix}
2\cdot3 & 2^2\cdot1\\
 2^2\cdot1 & 2\cdot1
\end{bmatrix}+I$. This shows that $k=2^{3-1}$ is not the least integer such that
$B^k=2\begin{bmatrix}
\tilde{l}_{1} & \tilde{l}_{2}\\
\tilde{l}_{3} & \tilde{l}_{4}
\end{bmatrix}+I$ with $\tilde{l}_{2}\in 2^{3-1}\mathbb{Z}$. However, we can add some  restriction on $l_i$, such as  $l_{2}\notin 2\mathbb{Z}$ and $(l_1+1_4)\in2\mathbb{Z}$,  and get a similar result as Theorem \ref{th(3.1)} for $p=2$. We do not prove it, because it will not be used in the proof of Theorem \ref{th(1.1)} and can be similarly proved as we did in the cases  $p\geq 3$.
\end{re}

\begin{thm} \label{th(3.3)}
For a prime $p\geq3$ and an integer $s\geq 1$, let $M\in GL(2,\mathbb{F}_p)$. If $O_p(M)=\iota$ and  $M^\iota=p\begin{bmatrix}
l_{1} & l_{2}\\
l_{3} & l_{4}
\end{bmatrix}+I$. Let $k$ be the least integer such that $M^k= I\;({\rm mod}~M_2(p^s\mathbb{Z}))$, then $k\leq\iota p^{s-1}$, i.e., $O_{p^s}(M)\leq\iota p^{s-1}$. Furthermore, if there exist $l_{i_0}\notin p\mathbb{Z}$ for some $1\leq i_0\leq4$, then $k=\iota p^{s-1}$.
\end{thm}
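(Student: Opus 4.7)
The plan is to deduce both assertions from Theorem~\ref{th(3.1)} applied to $B := M^\iota = pA + I$, where $A = \begin{bmatrix} l_1 & l_2 \\ l_3 & l_4 \end{bmatrix}$.

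First I would establish the unconditional bound $k \le \iota p^{s-1}$ by verifying directly that $M^{\iota p^{s-1}} \equiv I \pmod{p^s}$. Expanding via the binomial theorem,
\[
M^{\iota p^{s-1}} = (I + pA)^{p^{s-1}} = I + \sum_{j=1}^{p^{s-1}} \binom{p^{s-1}}{j}\, p^j A^j.
\]
The standard $p$-adic identity $v_p\!\bigl(\binom{p^{s-1}}{j}\bigr) = (s-1) - v_p(j)$, valid for $1 \le j \le p^{s-1}$, shows that the coefficient of every term in the sum has $p$-adic valuation $s - 1 + j - v_p(j) \ge s$, since $j - v_p(j) \ge 1$ for every $j \ge 1$. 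Hence every entry of the sum lies in $p^s\mathbb{Z}$, giving $M^{\iota p^{s-1}} \equiv I \pmod{p^s}$ and $k \le \iota p^{s-1}$.

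For the equality statement, I would assume $l_{i_0} \notin p\mathbb{Z}$ for some $i_0$. The congruence $M^k \equiv I \pmod{p^s}$ certainly forces $M^k \equiv I \pmod p$, and since $O_p(M) = \iota$ this gives $\iota \mid k$. Writing $k = \iota t$ yields $M^k = B^t$, and if $B^t = I + p\tilde A$ with $\tilde A = (\tilde l_i)_{1 \le i \le 4}$, then $B^t \equiv I \pmod{p^s}$ forces $p\tilde A \in p^s M_2(\mathbb{Z})$, so in particular $\tilde l_{i_0} \in p^{s-1}\mathbb{Z}$. Theorem~\ref{th(3.1)} then yields $t \ge p^{s-1}$, so $k \ge \iota p^{s-1}$, and combining with the upper bound finishes the proof.

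The most delicate point is extracting the lower bound from Theorem~\ref{th(3.1)}, which is stated as \emph{``$p^{s-1}$ is the minimal exponent forcing a single designated entry to lie in $p^{s-1}\mathbb{Z}$''}. One simply observes that ``all four entries lie in $p^{s-1}\mathbb{Z}$'' implies this for the distinguished entry indexed by $i_0$, so the minimality of Theorem~\ref{th(3.1)} still applies. The hypothesis $p \ge 3$ is essential both for invoking Theorem~\ref{th(3.1)} and for the $p$-adic valuation estimate above, in accordance with the $p=2$ caveat of Remark~\ref{th(3.2)}.
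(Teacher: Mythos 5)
Your proof is correct and follows essentially the same route as the paper: both reduce to Theorem~\ref{th(3.1)} applied to $B=M^{\iota}=pA+I$, derive $\iota\mid k$ from $O_p(M)=\iota$, and obtain the lower bound $t\ge p^{s-1}$ from the minimality clause of Theorem~\ref{th(3.1)}, the only cosmetic difference being that you verify the upper bound $M^{\iota p^{s-1}}\equiv I \ ({\rm mod}~M_2(p^{s}\mathbb{Z}))$ by a direct binomial/valuation computation instead of quoting the last sentence of Theorem~\ref{th(3.1)}. One small inaccuracy in your closing remark: the estimate $j-v_p(j)\ge 1$ holds for every prime, including $p=2$, so the hypothesis $p\ge 3$ is needed only to invoke Theorem~\ref{th(3.1)}, not for your upper-bound computation.
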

\begin{proof}
Write $M^\iota=pA+I$, it follows from Theorem \ref{th(3.1)} that $M^{\iota p^{s-1}}=(pA+I)^{p^{s-1}}=I\;({\rm mod}~M_2(p^s\mathbb{Z}))$. Thus $k\leq\iota p^{s-1}$. In particular, we claim that there exists an integer $k'$ such that $k=\iota k'$. If otherwise, $k=\iota\tilde{k}+r$ for some integers $\tilde{k}$ and  $1\leq r\leq \iota-1$. Note that $O_p(M)=\iota$, we have $M^{\iota\tilde{k}}=I\;({\rm mod}~M_2(p\mathbb{Z}))$. Consequently, $I=M^k=M^{\iota\tilde{k}+r}=IM^r=M^r\;({\rm mod}~M_2(p\mathbb{Z}))$, which contradicts with $O_p(M)=\iota>r$. Hence $k=\iota k'$.

Next, we prove $k=\iota p^{s-1}$ if there exist  $l_{i_0}\notin p\mathbb{Z}$ for some $1\leq i_0\leq 4$. Due to $k=\iota k'=O_{p^s}(M)$, then there exist integers $l_{1}'$, $l_{2}'$, $l_{3}'$ and $l_{4}'$ such that
\begin{equation}\label{eq(3.3)}
M^k=M^{\iota k'}=\left(p\begin{bmatrix}
l_{1} & l_{2}\\
l_{3} & l_{4}
\end{bmatrix}+I\right)^{k'}=p^s\begin{bmatrix}
l_{1}' & l_{2}'\\
l_{3}' & l_{4}'
\end{bmatrix}+I.
\end{equation}
Since $l_{i_0}\notin p\mathbb{Z}$, we deduce from  (\ref{eq(3.3)}) and Theorem \ref{th(3.1)}  that $k'\geq p^{s-1}$. Hence $k\geq \iota p^{s-1}$, together with $k\leq\iota p^{s-1}$, shows that $k=\iota p^{s-1}$.
\end{proof}

Let $M\in GL(2,\mathbb{F}_p)$ with $O_{p}(M)=\iota$ and let $\xi\in \mathring{E}_{p^s}^2$.  According to Theorem \ref{th(3.3)}, we have $\bigcup_{j=1}^{\infty} M^j\xi=\bigcup_{j=1}^{\iota\cdot p^{s-1}} M^j\xi\;({\rm mod}~\mathbb{Z}^2)$. Assume integers $p\geq 2,s\geq 1$, let
\begin{equation}\label{eq(3.4)}
\mathcal{T}_{p,s}=\{l: 0\leq l\leq p^s-1, l\in\mathbb{Z}\setminus p\mathbb{Z}\}.
\end{equation}
For any $l\in \mathcal{T}_{p,s}$, define
\begin{equation}\label{eq(3.5)}
 \mathcal{B}_{p,s}(l)=\bigcup_{j=1}^{\iota p^{s-1}} M^j \begin{pmatrix}
 \frac{p^{s-1}}{p^{s}}\\ \frac{l}{p^{s}}
  \end{pmatrix}\;({\rm mod}~\mathbb{Z}^2)
\end{equation}
and
\begin{equation}\label{eq(3.6)}
Q_{p,s}(l)=\left\{l':\mathcal{B}_{p,s}(l)=\mathcal{B}_{p,s}(l'), l'\in \mathcal{T}_{p,s}  \right\}.
\end{equation}
It is clear that if $l'\in Q_{p,s}(l)$, then $Q_{p,s}(l')=Q_{p,s}(l)$. Hence there exists a  nonnegative integer $m_{p,s}$ such that the set $\mathcal{T}_{p,s}$ can be divided into disjoint union $ \mathcal{T}_{p,s}=\bigcup_{i=0}^{m_{p,s}}Q_{p,s}(l_i)$. Note that $O_{p^s}(M)\leq\iota p^{s-1}$, we
can easily show that for any $l_1, l_2\in \mathcal{T}_{p,s}$, either $\mathcal{B}_{p,s}(l_1)=\mathcal{B}_{p,s}(l_2)$ or $\mathcal{B}_{p,s}(l_1)\bigcap\mathcal{B}_{p,s}(l_2)=\emptyset$.

Let $|E|$ denote the cardinality of set $E$, the following theorem describes the properties of $\mathcal{B}_{p,s}(l)$ and $Q_{p,s}(l)$ for  $l\in  \mathcal{T}_{p,s}$.

\begin{thm} \label{th(3.4)}
For a prime $p\geq3$ and an integer $s\geq 1$,  let $\mathring{E}_{p^s}^2$, $\mathcal {A}_p(s)$, $\mathcal{T}_{p,s}$, $\mathcal{B}_{p,s}$, $Q_{p,s}$ be defined by \eqref{eq(1.2)}, \eqref{eq(1.3)}, (\ref{eq(3.4)}), (\ref{eq(3.5)}) and (\ref{eq(3.6)}), respectively. If $O_{p}(M)=p^2-1$ and $M^{p^2-1}= p\begin{bmatrix}
l_{1} & l_{2}\\
l_{3} & l_{4}
\end{bmatrix}+I$ with $l_2\notin p\mathbb{Z}$,   then

(i) $\mathcal{B}_{p,s}(l)$ has $(p^2-1) p^{s-1}$ different elements for any $l\in \mathcal{T}_{p,s}$, i.e., $|\mathcal{B}_{p,s}(l)|=(p^2-1) p^{s-1}$;

(ii) $|Q_{p,s}(l)|= p-1$ for any $l\in \mathcal{T}_{p,s}$;

(iii) For any integer ${\bar{\eta}}\geq 1$,
\begin{equation*}
\bigcup_{j=1}^{(p^2-1)p^{{\bar{\eta}}-1}} M^j\mathcal {A}_p({\bar{\eta}}  )=\bigcup_{s=1}^{{\bar{\eta}}}\bigcup_{l\in \mathcal{T}_{p,s}}\mathcal{B}_{p,s}(l)=\mathring{E}_{p^{{\bar{\eta}} }}^2\;({\rm mod}\; \Bbb Z^2).
\end{equation*}

\end{thm}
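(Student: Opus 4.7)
The plan is to translate the three assertions into orbit data for the $M$-action on $(\mathbb{Z}/p^s\mathbb{Z})^2$, using Theorems \ref{th(3.1)} and \ref{th(3.3)} together with the mod-$p$ ergodicity of $M$ (Lemma \ref{th(2.12)}). Remark \ref{th(2.6)} ensures that $M$ preserves the $p$-adic level of any $\xi\in\mathring{E}_{p^{\bar\eta}}^2$, so I would argue stratum by stratum.

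For (i), take $\xi=(p^{s-1},l)^t/p^s$ with $l\in\mathcal{T}_{p,s}$. The size of its $M$-orbit modulo $\mathbb{Z}^2$ divides $O_{p^s}(M)=(p^2-1)p^{s-1}$ (Theorem \ref{th(3.3)}) and is a multiple of $p^2-1$ by ergodicity, so it equals $(p^2-1)p^r$ for some $0\le r\le s-1$. To rule out $r\le s-2$, I would assume $M^{(p^2-1)p^r}\xi\equiv\xi\pmod{\mathbb{Z}^2}$ and invoke Theorem \ref{th(3.1)} to write $M^{(p^2-1)p^r}=p^{r+1}A'+I$ with $A'\equiv A\pmod p$; the congruence then reads $p^{r+1}A'(p^{s-1},l)^t\in p^s\mathbb{Z}^2$, which forces $A(p^{s-1},l)^t\equiv (l_2 l,l_4 l)^t\equiv 0\pmod p$ for $s\ge 2$. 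This contradicts $l_2,l\notin p\mathbb{Z}$, so $r=s-1$; the case $s=1$ is just ergodicity.

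For (ii), I would study the subgroup $N=\langle M^{p^2-1}\rangle$ (of order $p^{s-1}$) acting on a fixed orbit $O$ from (i). Because $N\equiv I\pmod p$ while $M$ acts freely on $O$, the $N$-orbits inside $O$ must coincide with the mod-$p$ fibers of $O\to\mathbb{F}_p^2\setminus\{0\}$, each of size $p^{s-1}$. Fixing $\bar l\in\{1,\dots,p-1\}$, let $F$ be the $N$-orbit lying over $(0,\bar l)$; the first-coordinate map $\phi:F\to p\mathbb{Z}/p^s\mathbb{Z}$ has matching finite domain and codomain, so it suffices to prove injectivity. If $v'$ and $(pA+I)^k v'$ agree in their first coordinate, expanding gives $kp(Av')_1+O(p^2)\equiv 0\pmod{p^s}$, and the mod-$p$ piece forces $k\equiv 0\pmod p$ because $(Av')_1\equiv l_2\bar l\pmod p$ is a unit. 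I would then write $k=pk_1$, substitute $(pA+I)^p=p^2A^{(1)}+I$ with $A^{(1)}\equiv A\pmod p$ (the $s=2$ case of Theorem \ref{th(3.1)}), and repeat the argument; after $s-1$ such passes $k$ must vanish, so $F$ contains exactly one vector with first coordinate $p^{s-1}$, and summing over $\bar l$ yields $|Q_{p,s}(l)|=p-1$.

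Part (iii) is then essentially a counting exercise. Part (i) and a direct cardinality count show that the level-$s$ stratum of $\mathring E_{p^{\bar\eta}}^2$ has size $p^{2s-2}(p^2-1)$, and by part (ii) the union $\bigcup_{l\in\mathcal{T}_{p,s}}\mathcal{B}_{p,s}(l)$ consists of exactly $|\mathcal{T}_{p,s}|/(p-1)=p^{s-1}$ disjoint orbits of maximal size, whose total cardinality already matches the stratum; summing over $s$ yields $\mathring E_{p^{\bar\eta}}^2$. For the equality $\bigcup_{j=1}^{(p^2-1)p^{\bar\eta-1}}M^j\mathcal{A}_p(\bar\eta)=\bigcup_{s,l}\mathcal{B}_{p,s}(l)$, I would note that each $(p^{\bar\eta-1},l)/p^{\bar\eta}\in\mathcal{A}_p(\bar\eta)$, after cancellation of common $p$-powers, reduces to some $(p^{s-1},m)/p^s$ with $m\in\mathcal{T}_{p,s}$ (or falls into the unique level-$1$ orbit when $l\in p^{\bar\eta-1}\mathbb{Z}$), and $(p^2-1)p^{\bar\eta-1}$ is a multiple of every orbit's period $(p^2-1)p^{s-1}$, so each orbit is fully traversed. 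The main obstacle will be the inductive injectivity step in (ii): while the mod-$p$ analysis is transparent, one must carefully invoke Theorem \ref{th(3.1)} at each successive scale in order to propagate the unit condition on $l_2\bar l$ up to full uniqueness modulo $p^{s-1}$.
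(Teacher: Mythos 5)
Your proposal is correct, and parts (i) and (iii) follow essentially the paper's route: the paper packages the key rigidity into a ``Claim 1'' (if $M^{k}$ sends $(p^{s-1},l)^t/p^s$ to $(p^{s-1},l')^t/p^s$ with $l\equiv l'\pmod p$ and $1\le k\le (p^2-1)p^{s-1}$, then $k=(p^2-1)p^{s-1}$ and $l=l'$), proved by exactly your two ingredients --- ergodicity forces $(p^2-1)\mid k$, and Theorem \ref{th(3.1)} together with $l_2 l\not\equiv 0\pmod p$ pins down the $p$-part --- and your orbit-size computation in (i) is the fixed-point case of that claim. The genuine divergence is in (ii): the paper derives only the upper bound $|Q_{p,s}(l)|\le p-1$ from Claim 1 and then obtains the matching lower bound indirectly, by a global pigeonhole count (if some class had at most $p-2$ elements there would be too many pairwise disjoint orbits of size $(p^2-1)p^{s-1}$ to fit inside $\mathring{E}_{p^{s}}^2\setminus\mathring{E}_{p^{s-1}}^2$). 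You instead compute $|Q_{p,s}(l)|$ exactly and locally, by identifying the $\langle M^{p^2-1}\rangle$-orbits with the mod-$p$ fibers of a single $M$-orbit and showing the first-coordinate map on each fiber is a bijection onto $p\mathbb{Z}/p^s\mathbb{Z}$ via a second $p$-adic induction of Theorem \ref{th(3.1)} type. Both work: your version makes the structure of each orbit transparent (it meets the slice $\{(p^{s-1},\cdot)^t/p^s\}$ in exactly $p-1$ points, one per nonzero residue class), at the cost of repeating the scale-by-scale induction, while the paper's version recycles the cardinality bound from Remark \ref{th(2.6)} and avoids that second induction. Your handling of the first equality in (iii), including the stray point $(p^{\bar{\eta}-1},0)^t/p^{\bar{\eta}}=(1,0)^t/p$ which lands in the unique level-one orbit by ergodicity, is also sound; the paper dismisses that equality as immediate from the definitions.
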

\begin{proof}
We first prove the following claim.
\vspace{0.3cm}\\
\emph{{\bf Claim 1.} For any $l,l'\in \mathcal{T}_{p,s}$ with $l=l'\;({\rm mod}~p)$, if there exists an integer $1\leq k\leq (p^2-1)p^{s-1}$ such that
\begin{equation}\label{eq(3.7)}
M^{k} \frac{1}{p^s}\begin{pmatrix}
 p^{s-1}\\ l
  \end{pmatrix}=\frac{1}{p^s}\begin{pmatrix}
 p^{s-1}\\ l'
  \end{pmatrix}\;({\rm mod}~\mathbb{Z}^2),
\end{equation}
then $k=(p^2-1)p^{s-1}$ and $l=l'$.}
\begin{proof}[Proof of Claim 1]
Since $v:=(p^{s-1},l)^t=(p^{s-1},l')^t:=v'\;({\rm mod}~p\mathbb{Z}^2)$ and $O_p(M)=p^2-1$, it can be easily proved that $k=(p^2-1)k'$ for some integer $1\le k'\leq p^{s-1}$.
Otherwise $k=(p^2-1)k''+r$ for some integers $k''$ and $1\leq r< p^2-1$. It follows from $O_p(M)=p^2-1$ and Lemma \ref{th(2.12)} that $M$ is an ergodic matrix and $M^kv=M^{(p^2-1)k''+r}v=M^{r}v=v'=v\;({\rm mod}\ p\mathbb{Z}^2)$. Therefore, $M^{r+1}v=Mv\;({\rm mod}~p\mathbb{Z}^2)$ and
$$
|\{Mv,\cdots,M^rv,M^{r+1}v,\cdots,M^{p^2-1}v\}\;({\rm mod} \ p\mathbb{Z}^2)|<p^2-1,
$$
which contradicts the ergodicity of $M$. Hence $k=(p^2-1)k'$.

Next we prove $k'=p^{s-1}$. Denote $M^{p^2-1}=pA+I$, by \eqref{eq(3.1)} and \eqref{eq(3.7)}, there exist integers $\tilde{l}_{1},\tilde{l}_{2}, \tilde{l}_{3}$ and $\tilde{l}_{4}$ such that
\begin{eqnarray*} \nonumber
M^{k} \begin{pmatrix}
 p^{s-1}\\ l
  \end{pmatrix}
&=&(pA+I)^{k'}\begin{pmatrix}
 p^{s-1}\\ l
  \end{pmatrix}
=\begin{bmatrix}
p\tilde{l}_{1}+1 & p\tilde{l}_{2}\\
p\tilde{l}_{3} & p\tilde{l}_{4}+1
\end{bmatrix}\begin{pmatrix}
 p^{s-1}\\ l
  \end{pmatrix}\\
 &=& \begin{pmatrix}
 p^s\tilde{l}_{1}+p^{s-1}+p\tilde{l}_{2}l\\ p^s\tilde{l}_{3}+p\tilde{l}_{4}l+l
  \end{pmatrix}=\begin{pmatrix}
 p^{s-1}\\ l'
  \end{pmatrix}\;({\rm mod}~p^s\mathbb{Z}^2).
\end{eqnarray*}
This together with $l\notin p\mathbb{Z}$ yields $\tilde{l}_{2}=p^{s-1} m$ for some integer $m$. As $l_2\notin p\mathbb{Z}$, Theorem \ref{th(3.1)} implies $k'\geq p^{s-1}$. Combining  $k'\leq p^{s-1}$ shows that $k'=p^{s-1}$, hence $k=(p^2-1)p^{s-1}$. According to $l_2\notin p\mathbb{Z}$ and Theorem \ref{th(3.3)}, we have $O_{p^s}(M)=(p^2-1) p^{s-1}$, i.e., $M^{(p^2-1) p^{s-1}}=I\;({\rm mod}\; M_2(p^s\mathbb{Z}))$. Hence $l=l'$, which completes the proof of the claim.
\end{proof}
We now continue with the proof of Theorem \ref{th(3.4)}.

(i) Suppose $|\mathcal{B}_{p,s}(l)|<(p^2-1) p^{s-1}$, then there exist
  $1\leq k_1<k_2\leq(p^2-1) p^{s-1}$ such that  $M^{k_1} \begin{pmatrix}
 \frac{p^{s-1}}{p^{s}}\\ \frac{l}{p^{s}}
  \end{pmatrix}=M^{k_2} \begin{pmatrix}
 \frac{p^{s-1}}{p^{s}}\\ \frac{l}{p^{s}}
  \end{pmatrix}\pmod{\mathbb{Z}^2}$. Note that $O_{p^s}(M)=(p^2-1) p^{s-1}$, multiplying $M^{(p^2-1) p^{s-1}-k_2}$ on both sides of the above equation, we get
\begin{equation}\label{eq(3.8)}
M^{k_1+(p^2-1) p^{s-1}-k_2} \begin{pmatrix}
 \frac{p^{s-1}}{p^{s}}\\ \frac{l}{p^{s}}
  \end{pmatrix}=M^{k_2+(p^2-1) p^{s-1}-k_2} \begin{pmatrix}
 \frac{p^{s-1}}{p^{s}}\\ \frac{l}{p^{s}}
  \end{pmatrix}=\begin{pmatrix}
 \frac{p^{s-1}}{p^{s}}\\ \frac{l}{p^{s}}
  \end{pmatrix}\;({\rm mod}~\mathbb{Z}^2).
 \end{equation}
However, Claim 1 shows that \eqref{eq(3.8)} does not hold because $k_1+(p^2-1) p^{s-1}-k_2<(p^2-1) p^{s-1}$. Hence $\mathcal{B}_{p,s}(l)$ has $(p^2-1) p^{s-1}$ different elements for any $l\in \mathcal{T}_{p,s}$.

(ii) We first prove  $|Q_{p,s}(l)|\leq p-1$. Assume that there exists $l\in \mathcal{T}_{p,s}$ such that $|Q_{p,s}(l)|\geq p$, then there exist $l',l''\in Q_{p,s}(l)$ satisfies $l'\neq l''$ and $l'=l''\;({\rm mod}~p)$. Combining $O_{p^s}(M)=(p^2-1)p^{s-1}$ and $\mathcal{B}_{p,s}(l')=\mathcal{B}_{p,s}(l'')$, we deduce that there exists a
positive integer $k<(p^2-1)p^{s-1}$ such that
\begin{equation*}
M^{k} \frac{1}{p^s}\begin{pmatrix}
 p^{s-1}\\ l'
  \end{pmatrix}=M^{(p^2-1)p^{s-1}} \frac{1}{p^s}\begin{pmatrix}
 p^{s-1}\\ l''
  \end{pmatrix}=\frac{1}{p^s}\begin{pmatrix}
 p^{s-1}\\ l''
  \end{pmatrix}\;({\rm mod}~\mathbb{Z}^2),
\end{equation*}
which  contradicts with Claim 1. Hence $|Q_{p,s}(l)|\leq p-1$.

We now prove $|Q_{p,s}(l)|\ge p-1$. Suppose on the contrary that there exists $l_0 \in \mathcal{T}_{p,s}$ such that $|Q_{p,s}(l_0)|\leq p-2$, we decompose the set $\mathcal{T}_{p,s}$ into disjoint union
$Q_{p,s}(l_0)\bigcup\bigcup_{i=1}^{m_{p,s}}Q_{p,s}({l_i})$. Note that $|Q_{p,s}(l_0)|\leq p-2$ and $|Q_{p,s}({l_i})|\leq p-1$ for all $1\leq i\leq m_{p,s}$, we have
$$
p^s-p^{s-1}=|\mathcal{T}_{p,s}|=\sum_{i=0}^{m_{p,s}}|Q_{p,s}({l_i})|\leq p-2+{m_{p,s}}(p-1),
$$
this shows that ${m_s}\geq \frac{p^s-p^{s-1}-p+2}{p-1}$.

From the definition of $Q_{p,s}$, we see that $|\bigcup_{l\in\mathcal{T}_{p,s}}  \mathcal{B}_{p,s}(l)| =|\bigcup_{i=0}^{m_{p,s}}\bigcup_{l\in Q_{p,s}({l_i})} \mathcal{B}_{p,s}(l)|=|\bigcup_{i=0}^{m_{p,s}}\mathcal{B}_{p,s}(l_i)|$. Note that $|\mathcal{B}_{p,s}(l)|=(p^2-1) p^{s-1}$ for any $l\in \mathcal{T}_{p,s}$ and $\mathcal{B}_{p,s}(l_i)\cap \mathcal{B}_{p,s}(l_j)=\emptyset$ for any $0\leq i\neq j\leq {m_{p,s}}$, we obtain
\begin{align*}
\left|\bigcup_{i=0}^{m_{p,s}} \mathcal{B}_{p,s}(l_i)\right|=({m_{p,s}}+1)(p^2-1) p^{s-1} &\geq(\frac{p^s-p^{s-1}-p+2}{p-1}+1)(p^2-1) p^{s-1}\\
&= p^{2s}-p^{2(s-1)}+p^s+p^{s-1}> p^{2s}-p^{2(s-1)}.
\end{align*}
However, by Remark \ref{th(2.6)}, we have $\bigcup_{l\in \mathcal{T}_{p,s}}\mathcal{B}_{p,s}(l)\subset \mathring{E}_{p^{s}}^2\setminus \mathring{E}_{p^{s-1}}^2$. Hence
$$\left|\bigcup_{i=0}^{m_{p,s}} \mathcal{B}_{p,s}(l_i)\right|=\left| \bigcup_{l\in \mathcal{T}_{p,s}}\mathcal{B}_{p,s}(l)\right| \le \left|\mathring{E}_{p^{s}}^2\setminus \mathring{E}_{p^{s-1}}^2\right|=p^{2s}-p^{2(s-1)}.$$
This contradiction yields $|Q_{p,s}(l)|\geq p-1$. Therefore, $|Q_{p,s}(l)|=p-1$ for any $l\in \mathcal{T}_{p,s}$.

(iii) From the definition of $\mathcal {A}_p(\bar{\eta})$ and $\mathcal{B}_{p,s}$, the first equation is clearly established. Next we prove  $\bigcup_{s=1}^{{\bar{\eta}}}\bigcup_{l\in \mathcal{T}_{p,s}}\mathcal{B}_{p,s}(l)=\mathring{E}_{p^{{\bar{\eta}} }}^2\;({\rm mod}\; \Bbb Z^2)$.
Since $|Q_{p,s}(l)|=p-1$, the set $\mathcal{T}_{p,s}$ can be decomposed into disjoint union
$\bigcup_{i=1}^{p^{s-1}}Q_{p,s}({l_i})$. Note that $\mathcal{B}_{p,s}(l_i)\cap \mathcal{B}_{p,s}(l_j)=\emptyset$ for any $1\leq i\neq j\leq p^{s-1}$ and $|\mathcal{B}_{p,s}(l)|=(p^2-1) p^{s-1}$ for any $l\in\mathcal{T}_{p,s}$, we get
$$
\left|\bigcup_{l\in \mathcal{T}_{p,s}}\mathcal{B}_{p,s}(l)\right|=\left|\bigcup_{i=1}^{p^{s-1}}\mathcal{B}_{p,s}(l_i)\right|=p^{s-1}(p^2-1)p^{s-1}=p^{2s}-p^{2s-2}.
$$
For any $s\neq s'$, it is easy to see that $\left(\bigcup_{l\in \mathcal{T}_{p,s}}\mathcal{B}_{p,s}(l)\right)\bigcap \left(\bigcup_{l\in \mathcal{T}_{p,s'}}\mathcal{B}_{p,s'}(l)\right)=\emptyset$  by  Remark \ref{th(2.6)}. Hence
\begin{equation}\label{eq(3.9)}
\left|\bigcup_{s=1}^{{\bar{\eta}}}\bigcup_{l\in \mathcal{T}_{p,s}}\mathcal{B}_{p,s}(l)\right|=\sum_{s=1}^{{\bar{\eta}} }(p^{2s}-p^{2(s-1)})=p^{2{\bar{\eta}} }-1=|\mathring{E}_{p^{\bar{\eta}}}^2|.
\end{equation}
It follows from Lemma \ref{th(2.5)}
that $\bigcup_{s=1}^{\bar{\eta}}\bigcup_{l\in \mathcal{T}_{p,s}}\mathcal{B}_{p,s}(l)\subset\mathring{E}_{p^{\bar{\eta}}}^2\;({\rm mod}\; \Bbb Z^2)$.
Combining this with \eqref{eq(3.9)}, we obtain $\bigcup_{s=1}^{\bar{\eta}}\bigcup_{l\in \mathcal{T}_{p,s}}\mathcal{B}_{p,s}(l)=\mathring{E}_{p^{\bar{\eta}}}^2\;({\rm mod}\; \Bbb Z^2).$
\end{proof}

\begin{re}\label{th(3.5)}
For the matrix $M$ satisfies the conditions of Theorem \ref{th(3.4)}, by Claim 1, we have  $\mathcal{B}_{p,s}(l)\bigcap \mathcal{B}_{p,s}(l')=\emptyset$ for any $l,l'\in \mathcal{T}_{p,s}$ with $ l\neq l'$  and $l=l'\;({\rm mod} \ p)$. Let $1\leq i\leq p-1$, we define $\mathcal{R}_{p,s}(i)=\{l:l=i\;({\rm mod} \ p),0\leq l\leq p^s-1, l\in\mathbb{Z}\}$, then $|\mathcal{R}_{p,s}(i)|=p^{s-1}$. Hence
Theorem \ref{th(3.4)}(i) implies that
\begin{equation*}
\left|\bigcup_{l\in \mathcal{R}_{p,s}(i)}\mathcal{B}_{p,s}(l)\right|=p^{s-1}(p^2-1) p^{s-1}=p^{2s}-p^{2(s-2)}
\end{equation*}
for any $1\leq i\leq p-1$. By Remark \ref{th(2.6)}, we have
\begin{equation}\label{eq(3.10)}
\bigcup_{l\in \mathcal{R}_{p,s}(i)}\mathcal{B}_{p,s}(l)=\mathring{E}_{p^{s}}^2\setminus \mathring{E}_{p^{s-1}}^2\;({\rm mod} \ \Bbb Z^2).
\end{equation}
\end{re}
 Let $\det (M)=L$ and $\varphi(q)$ be the Euler's phi function. If  $\gcd(L,q)=1$, it follows from Lemma \ref{th(2.7)} that there exists an integer $n$ such that
\begin{align}\label{eq(3.11)}
L^{\varphi(q)}=nq+1.
\end{align}
To prove Theorem \ref{th(1.2)}, we  need the following lemma, which was proved in \cite{Liu-Dong-Li_2017}.
\begin{lemma} \label{th(3.6)}
Let $M\in M_2(\mathbb{Z})$ be an expanding matrix with $\det(M)=L$ and $\mathring{E}_q^2$ be defined by \eqref{eq(1.2)}. If $\gcd(L,q)=1$, then
\begin{align*}
M^{*j}(\lambda+\Bbb Z^2)\supset L^{\varphi(q)j}(M^{*j}\lambda+\Bbb Z^2)~~ \hbox { for all } \lambda \in \mathring{E}_{q}^2.
\end{align*}
\end{lemma}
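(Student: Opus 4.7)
The plan is to unwind the set-theoretic inclusion and reduce it to a single integrality check. Fix $\lambda = \tfrac{1}{q}(l_1,l_2)^t \in \mathring{E}_q^2$ and take an arbitrary element $L^{\varphi(q)j}(M^{*j}\lambda + z)$ of the right-hand side, with $z\in\mathbb{Z}^2$. To realize it as $M^{*j}(\lambda+w)$ with $w\in\mathbb{Z}^2$, I would simply solve the equation formally and obtain
$$w \;=\; (L^{\varphi(q)j}-1)\lambda \;+\; L^{\varphi(q)j}\,M^{*-j}z.$$
The lemma then reduces to showing that this $w$ is an integer vector, which I would verify summand by summand.

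For the first summand I would invoke Euler's theorem (Lemma~\ref{th(2.7)}) in the form already recorded as \eqref{eq(3.11)}: since $\gcd(L,q)=1$, one has $L^{\varphi(q)}=nq+1$ for some $n\in\mathbb{Z}$. A binomial expansion of $(nq+1)^j$ then yields $L^{\varphi(q)j}\equiv 1\pmod q$, so $q\mid (L^{\varphi(q)j}-1)$. Because the coordinates of $\lambda$ have denominator $q$, this immediately gives $(L^{\varphi(q)j}-1)\lambda\in\mathbb{Z}^2$.

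For the second summand I would use the cofactor identity $M^{*-1}=L^{-1}\mathrm{adj}(M^*)$, valid because $\det(M^*)=L$ and $M^*$ has integer entries. This shows $LM^{*-1}\in M_2(\mathbb{Z})$, hence $L^j M^{*-j}\in M_2(\mathbb{Z})$. Since $\varphi(q)\geq 1$, I can factor
$$L^{\varphi(q)j}M^{*-j}z \;=\; L^{(\varphi(q)-1)j}\,\bigl(L^j M^{*-j}\bigr)z \;\in\;\mathbb{Z}^2.$$
Combining the two observations gives $w\in\mathbb{Z}^2$, which proves the inclusion. I do not anticipate any real obstacle here; the statement is essentially a clean packaging of Euler's theorem together with the adjugate formula for $M^{-1}$, and the coprimality hypothesis $\gcd(L,q)=1$ is exactly what lets Euler's theorem be applied in the first step.
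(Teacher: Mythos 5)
Your proof is correct and complete: reducing the inclusion to the integrality of $w=(L^{\varphi(q)j}-1)\lambda+L^{\varphi(q)j}M^{*-j}z$, then handling the two summands via Euler's theorem (through \eqref{eq(3.11)}) and the adjugate identity $LM^{*-1}\in M_2(\mathbb{Z})$ is exactly the standard argument; the paper itself does not reprove this lemma but imports it from \cite{Liu-Dong-Li_2017}, where the proof proceeds along the same lines. No gaps.
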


\begin{proof}[Proof of Theorem \ref{th(1.2)}] First, we prove that $n^*(\mu_{M,D})\leq p^{2{\bar{\eta}}}$.  Since $M$ is an integer matrix and $\gcd(\det(M),p)=1$, by Lemma \ref{th(2.5)}, we have $M^*(
\mathring{E}_{p^{\bar{\eta}}}^2+\mathbb{Z}^2)=M^*(
\mathring{E}_{p^{\bar{\eta}}}^2)+M^*(\mathbb{Z}^2)\subset M^*(
\mathring{E}_{p^{\bar{\eta}}}^2)+\mathbb{Z}^2=\mathring{E}_{p^{\bar{\eta}} }^2+\mathbb{Z}^2$. It follows from $\mathcal{ Z}(m_D)\subset \mathring{E}_{p^{\bar{\eta}}}^2+\mathbb{Z}^2$ and Lemma \ref{th(2.4)} that
\begin{align}\label{eq(3.12)}
n^*(\mu_{M,D})\leq |\mathring{E}_{p^{\bar{\eta}}}^2|+1=p^{2{\bar{\eta}} }.
\end{align}

Second, we will show that there exists an expanding integer matrix $M_0$ such that  $n^*(\mu_{M_0,D})= p^{2{\bar{\eta}}}$ if $p$ is a prime. Assume that there exists an expanding matrix $M_0\in GL(2,\mathbb{F}_p)$ such that $M_0^*$ satisfies Theorem \ref{th(3.4)} (we will prove its existence at the end of the proof). Let $\det(M_0)=L$, then $\gcd(L,p)=1$. From Theorems \ref{th(3.3)} and \ref{th(3.4)}, we have $O_p(M_0^*)=p^2-1$, $O_{p^{\bar{\eta}}}(M_0^*)=(p^2-1) p^{{\bar{\eta}}-1}$ and
  \begin{equation}\label{eq(3.13)}
\bigcup_{j=1}^{(p^2-1)p^{{\bar{\eta}}-1}}{M_0^*}^j\mathcal {A}_p({\bar{\eta}})=\mathring{E}_{p^{\bar{\eta}}}^2\;({\rm{mod}}  \ \mathbb{Z}^2).
\end{equation}
Since $\mathcal{N}(\mathcal {A}_p({\bar{\eta}})+\mathbb{Z}^2)\subset \mathcal{ Z}(m_D)$ and $O_{p^{\bar{\eta}}}(M_0^*)=(p^2-1) p^{{\bar{\eta}}-1}$, by \eqref{eq(2.3)} and Lemma \ref{th(3.6)}, we have
\begin{align}\label{eq(3.14)} \nonumber
\mathcal{ Z}(\hat{\mu}_{M_0,D})&=\bigcup_{j=1}^{\infty}M_0^{*j}(\mathcal{ Z}(m_{D}))\supset \mathcal{N}\bigcup_{j=1}^{\infty}M_0^{*j}(\mathcal {A}_p({\bar{\eta}})+\Bbb Z^2)\\
&\supset \mathcal{N}\bigcup_{j=1}^{\infty}L^{\varphi(p^{\bar{\eta}} )j}(M_0^{*j}\mathcal {A}_p({\bar{\eta}})+\Bbb Z^2)\supset \mathcal{N}\bigcup_{j=1}^{(p^2-1) p^{{\bar{\eta}}-1}}L^{\varphi(p^{\bar{\eta}})j}(M_0^{*j}\mathcal {A}_p({\bar{\eta}})+\Bbb Z^2).
\end{align}
Let $\Lambda=\mathcal{N}L^{\varphi(p^{\bar{\eta}})(p^2-1) p^{{\bar{\eta}}-1}} {E}_{p^{\bar{\eta}}}^2 $, we will show that $\mathcal {E}_\Lambda=\{e^{2\pi i\langle\lambda,x\rangle}:\lambda\in\Lambda\}$ is an orthogonal set of $L^2(\mu_{M,D})$. For any $\lambda_1\ne \lambda_2\in\Lambda$, there exists $\lambda^\prime\in\mathring{E}_{p^{\bar{\eta}}}^2 \;({\rm mod}\; \Bbb Z^2)$ such that $\lambda_1-\lambda_2= \mathcal{N}L^{\varphi(p^{\bar{\eta}})(p^2-1) p^{{\bar{\eta}} -1}}\lambda^\prime$. By~$(\ref{eq(3.13)})$, there exist $\lambda_0\in \mathcal {A}_p({\bar{\eta}})$ and $1\leq j_0\leq (p^2-1) p^{{\bar{\eta}}-1}$ such that $\lambda^\prime={M_0^*}^{j_0}\lambda_0\;({\rm mod}\; \Bbb Z^2)$. Then
\begin{align}\label{eq(3.15)}\nonumber
\lambda_1-\lambda_2 &\in \mathcal{N}L^{\varphi(p^{\bar{\eta}})(p^2-1) p^{{\bar{\eta}} -1}}({M_0^*}^{j_0}\lambda_0+\Bbb Z^2)\\
 &=\mathcal{N}L^{\varphi(p^{\bar{\eta}}){j_0}}(L^{\varphi(p^{\bar{\eta}} )((p^2-1) p^{{\bar{\eta}}-1}-{j_0})}({M_0^*}^{j_0}\lambda_0+\Bbb Z^2)).
\end{align}
Since $\gcd(L,p^{{\bar{\eta}}})=1$, by using (\ref{eq(3.11)}), we obtain $L^{\varphi(p^{\bar{\eta}})((p^2-1) p^{{\bar{\eta}} -1}-{j_0})}=p^{{\bar{\eta}}}m^\prime+1$ for some integer $m^\prime$. It follows from $p^{{\bar{\eta}}}m^\prime {M_0^*}^{j_0}\lambda_0\in \Bbb Z^2$ that
$L^{\varphi(p^{\bar{\eta}})((p^2-1) p^{{\bar{\eta}} -1}-{j_0})}({M_0^*}^{j_0}\lambda_0+\Bbb Z^2)=(p^{{\bar{\eta}} }m^\prime+1)({M_0^*}^{j_0}\lambda_0+\Bbb Z^2)\subset {M_0^*}^{j_0}\lambda_0+\Bbb Z^2$.
Hence, by $(\ref{eq(3.14)})$ and $(\ref{eq(3.15)})$, we have
\begin{align*}
\lambda_1-\lambda_2 \in \mathcal{N}L^{\varphi(p^{\bar{\eta}}){j_0}} ({M_0^*}^{j_0}\lambda_0+\Bbb Z^2)\subset \mathcal{N}L^{\varphi(p^{\bar{\eta}})j_0}(M_0^{*j_0}\mathcal {A}_p({\bar{\eta}})+\Bbb Z^2) \subset\mathcal{ Z}(\hat{\mu}_{M_0,D}).
\end{align*}
This shows that $(\Lambda-\Lambda)\setminus\{0\}\subset\mathcal{ Z}(\hat{\mu}_{M_0,D})$, by~$(\ref{eq(2.2)})$,  the elements in $\mathcal {E}_\Lambda$ are mutually orthogonal.  Hence $n^*(\mu_{M_0,D})\geq p^{2{\bar{\eta}}}$, and \eqref{eq(3.12)} gives $n^*(\mu_{M_0,D})= p^{2{\bar{\eta}}}$.

Finally, we prove that there exists an expanding matrix $M_0\in GL(2,\mathbb{F}_p)$  such that $M_0^*$ satisfies  Theorem \ref{th(3.4)}. It is well known that the matrix in $ GL(2,\mathbb{F}_p)$ with order equals $p^2-1$ always exists. Let $M_0\in GL(2,\mathbb{F}_p)$ with $O_p(M_0)=p^2-1$ and ${M_0}^{p^2-1}= p\begin{bmatrix}
l_{1} & l_{2}\\
l_{3} & l_{4}
\end{bmatrix}+I$.

(a) If $l_2\notin p\mathbb{Z}$ or $l_3\notin p\mathbb{Z}$.
Let $\widetilde{M}_0=\widetilde {N}M_0$, where $\widetilde {N}=p^2n+1$ is a sufficient large integer such that $\widetilde{M}_0$ is an expanding integer matrix.
Then $O_p(\widetilde{M}_0)=O_p(M_0)=p^2-1$ and it is clear that there exists an integer $n'$ such that $(p^2n+1)^{p^2-1}=p^2n'+1$,  hence
\begin{align}\label{eq(3.16)}
\widetilde{M}_0^{p^2-1}={\widetilde {N}M_0}^{p^2-1}=p\begin{bmatrix}
p^2n'l_1+pn'+l_1 & p^2n'l_2+l_2\\
p^2n'l_3+l_3 & p^2n'l_4+pn'+l_4
\end{bmatrix}+I.
\end{align}
Let $\mathscr{M}_0=\widetilde{M}_0^*$ if $l_2\notin p\mathbb{Z}$ or  $\mathscr{M}_0=\widetilde{M}_0$ if $l_3\notin p\mathbb{Z}$. Then \eqref{eq(3.16)} shows that $\mathscr{M}_0^*$ satisfies  Theorem \ref{th(3.4)}, and therefore $n^*(\mu_{\mathscr{M}_0,D})= p^{2{\bar{\eta}}}$.

(b) If $l_2,l_3\in p\mathbb{Z}$. Let
$\widetilde{M}_1=M_0+pI$. Then $O_p(\widetilde{M}_1)=O_p(M_0)$ and
\begin{equation}\label{eq(3.17)}
(\widetilde{M}_1)^{p^2-1}=\sum_{r=2}^{p^2-1}C_{p^2-1}^rM_0^{p^2-1-r}p^r+{(p^2-1)}pM_0^{p^2-2}+M_0^{p^2-1}:=p\begin{bmatrix}
l'_{1} & l'_{2}\\
l'_{3} & l'_{4}
\end{bmatrix}+I.
\end{equation}
We will prove $l'_{2}\notin p\mathbb{Z}$ or $l'_{3}\notin p\mathbb{Z}$. Since $M_0^{p^2-1}=p\begin{bmatrix}
l_{1} & l_{2}\\
l_{3} & l_{4}
\end{bmatrix}+I$ with $l_2,l_3\in p\mathbb{Z}$, by \eqref{eq(3.17)}, we only need to show that $M_0^{p^2-2}=\begin{bmatrix}
m_1 & m_2\\
m_3 & m_4
\end{bmatrix}$ satisfies $m_2\notin p\mathbb{Z}$ or $m_3\notin p\mathbb{Z}$. Suppose on the  contrary that $m_2=ps_2, m_3=ps_3$. Then $M_0\in GL(2,\mathbb{F}_p)$ implies $m_1,m_4\notin p\mathbb{Z}$.  Let $M_0=\begin{bmatrix}
r_1 & r_2\\
r_3 & r_4
\end{bmatrix}$, we have
\begin{align}\label{eq(3.18)}
M_0^{p^2-1}=M_0^{p^2-2}\cdot M_0
=\begin{bmatrix}
ps_2r_3+m_1r_1 & ps_2r_4+m_1r_2\\
ps_3r_1+m_4r_3 & ps_3r_2+m_4r_4
\end{bmatrix}=p\begin{bmatrix}
l_{1} & l_{2}\\
l_{3} & l_{4}
\end{bmatrix}+I.
\end{align}
Since $l_2,l_3\in p\mathbb{Z}$, by $m_1,m_4\notin p\mathbb{Z}$ and \eqref{eq(3.18)}, we have $r_2,r_3 \in p\mathbb{Z}$. However, if $r_2,r_3 \in p\mathbb{Z}$, according to Lemma \ref{th(2.7)}, $M_0^{p-1}=\begin{bmatrix}
r_1 & 0\\
0 & r_4
\end{bmatrix}^{p-1}=\begin{bmatrix}
r_1^{p-1} & 0\\
0 & r_4^{p-1}
\end{bmatrix}=I\;({\rm{ mod} } \ M_2(p\mathbb{Z}))$. This contradicts with $O_p(M_0)=p^2-1$. Hence $m_2\notin p\mathbb{Z}$ or $m_3\notin p\mathbb{Z}$, which implies that $l'_{2}\notin p\mathbb{Z}$ or $l'_{3}\notin p\mathbb{Z}$. The remained proof is the same as case (a). This proves the existence.
\end{proof}

Now we are ready to prove  Theorem \ref{th(1.1)}. In order to prove it, we need the following lemma of \cite{Liu-Dong-Li_2017}.

\begin{lemma} \label{th(3.7)}{\rm \cite[Theorem 1.3]{Liu-Dong-Li_2017}} Let $M\in M_2(\mathbb{Z})$ be an expanding matrix and $D\subset\mathbb{Z}^2$ be a finite subset, and let
$\mu_{M,D}$, $\mathring{E}_{q}^2$, $\mathcal{Z}_{D}^2$, $n^*(\mu_{M,D})$ be defined by \eqref{eq(1.1)}, \eqref{eq(1.2)}, \eqref{eq(1.5)} and \eqref{eq(2.4)}, respectively. If $\emptyset\neq\mathcal{Z}_{D}^2\subset \mathring{E}_{q}^2$ and
$\gcd(\det(M),q)=1$, then
\begin{align*}
n^*(\mu_{M,D})
\left\{
\begin{array}{ll}
< q^2,\quad &{\rm if}\; \bigcup_{j=1}^{q^2-1}{M^*}^j\mathcal{Z}_{D}^2\subsetneqq \mathring{E}_q^2\;({\rm{mod}}  \ \mathbb{Z}^2), \\
=q^2,\quad &{\rm if}\; \bigcup_{j=1}^{q^2-1}{M^*}^j\mathcal{Z}_{D}^2=\mathring{E}_q^2 \;({\rm{mod}}  \ \mathbb{Z}^2).
\end{array}
\right.
\end{align*}
\end{lemma}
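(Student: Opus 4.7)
The plan is to use Lemma \ref{th(2.4)} together with the action of $M^*$ on $\mathring{E}_q^2 \pmod{\mathbb{Z}^2}$ for both upper bounds, and to mimic the construction from the proof of Theorem \ref{th(1.2)} for the lower bound of case (ii). The common upper bound $n^*(\mu_{M,D}) \le q^2$ follows from Lemma \ref{th(2.4)} applied to $\mathrm{Z}' = \mathring{E}_q^2$: the hypothesis $\mathcal{Z}_D^2 \subset \mathring{E}_q^2$ together with the $\mathbb{Z}^2$-periodicity of $m_D$ gives $\mathcal{Z}(m_D) \subset \mathring{E}_q^2 + \mathbb{Z}^2$, and Lemma \ref{th(2.5)} (using $\gcd(\det(M),q)=1$) gives $M^*(\mathring{E}_q^2 + \mathbb{Z}^2) \subset \mathring{E}_q^2 + \mathbb{Z}^2$.

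For case (i), I would sharpen this by replacing $\mathring{E}_q^2$ with $\mathrm{Z}' := \bigcup_{j=1}^{q^2-1} M^{*j}(\mathcal{Z}_D^2) \pmod{\mathbb{Z}^2}$. Since $M^*$ permutes the finite set $\mathring{E}_q^2 \pmod{\mathbb{Z}^2}$ and every $M^*$-orbit there has length at most $|\mathring{E}_q^2| = q^2-1$, this $\mathrm{Z}'$ already coincides with the full forward orbit $\bigcup_{j\ge 1} M^{*j}(\mathcal{Z}_D^2) \pmod{\mathbb{Z}^2}$, contains $\mathcal{Z}_D^2$, and is $M^*$-invariant. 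Under the case (i) hypothesis $\mathrm{Z}' \subsetneqq \mathring{E}_q^2 \pmod{\mathbb{Z}^2}$ we have $|\mathrm{Z}'| \le q^2-2$, so Lemma \ref{th(2.4)} yields $n^*(\mu_{M,D}) \le |\mathrm{Z}'|+1 < q^2$.

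For case (ii) I would exhibit an explicit orthogonal family of size $q^2$. Setting $L := \det(M)$ and $\Lambda := L^{\varphi(q)(q^2-1)} E_q^2$, a given pair $\lambda_1 \ne \lambda_2 \in \Lambda$ can be written as $\lambda_1-\lambda_2 = L^{\varphi(q)(q^2-1)}(\lambda''+z')$ with $\lambda''\in\mathring{E}_q^2$ and $z'\in\mathbb{Z}^2$, and the orbit hypothesis supplies $\lambda_0\in\mathcal{Z}_D^2$ and $1\le j_0\le q^2-1$ with $\lambda''\equiv M^{*j_0}\lambda_0 \pmod{\mathbb{Z}^2}$. Writing $L^{\varphi(q)(q^2-1-j_0)} = 1+qm$ via Euler's theorem (Lemma \ref{th(2.7)}) and using $q\lambda_0\in\mathbb{Z}^2$, the factor $1+qm$ fixes $M^{*j_0}\lambda_0 \pmod{\mathbb{Z}^2}$, so $\lambda_1-\lambda_2 \in L^{\varphi(q)j_0}(M^{*j_0}\lambda_0+\mathbb{Z}^2)$. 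Lemma \ref{th(3.6)} then embeds this into $M^{*j_0}(\lambda_0+\mathbb{Z}^2) \subset M^{*j_0}\mathcal{Z}(m_D) \subset \mathcal{Z}(\hat\mu_{M,D})$, so $\mathcal{E}_\Lambda$ is orthogonal by \eqref{eq(2.2)} and $n^*(\mu_{M,D}) \ge q^2$.

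The main obstacle is the lower bound in case (ii): the exponent $\varphi(q)(q^2-1)$ must be split into a ``clearing'' part that collapses integer multiples via Euler's theorem and a ``pull-back'' part through $M^{*j_0}$ via Lemma \ref{th(3.6)}, in a way compatible with the orbit index $j_0$ supplied by the hypothesis. The upper bound and case (i) are by contrast routine once one observes that $M^*$ permutes $\mathring{E}_q^2 \pmod{\mathbb{Z}^2}$, so iterating $M^*$ on $\mathcal{Z}_D^2$ stabilizes within $q^2-1$ steps.
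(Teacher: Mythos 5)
Your proof is correct. Note that the paper itself does not prove this lemma---it is imported verbatim from \cite[Theorem 1.3]{Liu-Dong-Li_2017}---but your reconstruction follows exactly the strategy the paper uses for its own Theorem \ref{th(1.2)}: Lemma \ref{th(2.4)} with the $M^*$-invariant forward orbit of $\mathcal{Z}_D^2$ (which stabilizes within $q^2-1$ steps because $M^*$ permutes $\mathring{E}_q^2 \pmod{\mathbb{Z}^2}$ by Lemma \ref{th(2.5)}) for the upper bounds, and the set $\Lambda=L^{\varphi(q)(q^2-1)}E_q^2$ with the Euler's-theorem clearing step and Lemma \ref{th(3.6)} for the lower bound in case (ii). All the individual steps check out, so nothing further is needed.
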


\begin{proof}[Proof of Theorem \ref{th(1.1)}]  (i) Suppose $2\alpha_1-\beta_1\notin 3\mathbb{Z}$ or $2\alpha_2-\beta_2\notin 3\mathbb{Z}$.
First, we prove $n^*(\mu_{M,D})\leq 9$.
Let
\begin{eqnarray*}
A_1=\begin{bmatrix}
 \alpha_1\beta_2-\alpha_2\beta_1&0 \\
 0&\alpha_1\beta_2-\alpha_2\beta_1
\end{bmatrix},
\end{eqnarray*}
then $D_1=A_1^{-1}D=\frac{1}{\alpha_1\beta_2-\alpha_2\beta_1}D$ and $A_1^{-1}MA_1=M$. By Lemma \ref{th(2.2)}, we have $n^*(\mu_{M,D_1})=n^*(\mu_{M,D})$.
It is well known that
$
1+e^{2\pi i\theta_1}+e^{2\pi i\theta_2}=0
$
if and only if
\begin{align}\label{eq(3.19)}
 \left\{
 \begin{array}{ll}
\theta_1=1/3+k_1,
\;\;\\
\theta_2=2/3+k_2,
\end{array}
\right.
\;\;\text{or}\;\;\;\;\;\
\left\{
 \begin{array}{ll}
\theta_1=2/3+k_3,
\;\;\\
\theta_2=1/3+k_4,
\end{array}
\right.
\end{align}
where~$k_1,k_2,k_3,k_4\in \mathbb{Z}$. By \eqref{eq(3.19)},  we can easily obtain
\begin{eqnarray}\label{eq(3.20)}
\mathcal{Z}(m_{D_1})=\{x\in\mathbb{R}^n:m_{D_1}(x)=0\}:=Z_0 \cup \widetilde{Z}_0,
\end{eqnarray}
where
\begin{eqnarray*}
Z_0=\left\{
\begin{pmatrix}
  \frac{\beta_2-2\alpha_2}{3} \\ \frac{2\alpha_1-\beta_1}{3}
  \end{pmatrix}\
  +
  \begin{pmatrix}
  {\beta_2 k_1-\alpha_2 k_2} \\ {\alpha_1 k_2-\beta_1 k_1}
  \end{pmatrix}\
  :k_1,k_2\in\mathbb{Z}
  \right\},
 \end{eqnarray*}
and
\begin{eqnarray*}
\widetilde{Z}_0=\left\{
\begin{pmatrix}
  \frac{2\beta_2-\alpha_2}{3} \\ \frac{\alpha_1-2\beta_1}{3}
  \end{pmatrix}\
  +
  \begin{pmatrix}
  {\beta_2\widetilde{k}_1-\alpha_2\widetilde{k}_2}\\ {\alpha_1\widetilde{k}_2-\beta_1\widetilde{k}_1}
  \end{pmatrix}\
  :\widetilde{k}_1,\widetilde{k}_2\in\mathbb{Z}
  \right\}.
 \end{eqnarray*}
Since $2\alpha_1-\beta_1\notin 3\mathbb{Z}$ or $2\alpha_2-\beta_2\notin 3\mathbb{Z}$, \eqref{eq(3.20)} yields
$\mathcal{Z}(m_{D_1})\subset \mathring{E}_3^2+\mathbb{Z}^2$.
 As $\gcd(\det(M),3)=1$, we conclude from Lemma \ref{th(2.5)} that $M^{*}( \mathring{E}_3^2+\mathbb{Z}^2)\subset  \mathring{E}_3^2+\mathbb{Z}^2$. Therefore, Lemma \ref{th(2.4)} implies $n^*(\mu_{M,D})=n^*(\mu_{M,D_1})\leq |\mathring{E}_3^2|+1=9$.

Second, we show that the number $9$ is the best. We will prove it in two cases: Case 1, $\alpha_1\beta_2-\alpha_2\beta_1\notin 3\mathbb{Z}$; Case 2, $\alpha_1\beta_2-\alpha_2\beta_1\in 3\mathbb{Z}$.

 Case 1, if $\alpha_1\beta_2-\alpha_2\beta_1\notin 3\mathbb{Z}$. Let
\begin{eqnarray*}
A_2=\begin{bmatrix}
 \alpha_1&\beta_1\\
 \alpha_2&\beta_2
\end{bmatrix},  \ \ \ \ \ \ \ \ \ \ M_2=(\alpha_1\beta_2-\alpha_2\beta_1)\begin{bmatrix}
 0&1\\
 1&1
\end{bmatrix}
\end{eqnarray*}
and let $D_2=A_2^{-1}D=\{(0,0)^t,(1,0)^t,(0,1)^t\}$,  $A_2^{-1}M_1A_2=M_2$. Then $M_1$ is an expanding integer matrix with $\gcd(\det(M_1),3)=\gcd(\det(M_2),3)=1$. By \eqref{eq(3.19)} and a direct calculation, we obtain $\mathcal{Z}_{D_2}^2=\{(1/3,2/3)^t,(2/3,1/3)^t\}\subset \mathring{E}_{3}^2$ and $\bigcup_{j=1}^8{M_2^*}^j\mathcal{Z}_{D_2}^2=\mathring{E}_3^2\;({\rm{mod}}  \ \mathbb{Z}^2)$. Hence Lemmas \ref{th(2.2)} and \ref{th(3.7)} imply $n^*(\mu_{M_2,D_2})=n^*(\mu_{M_1,D})=9$, which shows that the number $9$ is the best.

Case 2, $\alpha_1\beta_2-\alpha_2\beta_1\in 3\mathbb{Z}$. Observe that the case (ii) of Theorem \ref{th(1.1)} also satisfies $\alpha_1\beta_2-\alpha_2\beta_1\in 3\mathbb{Z}$, so the following discussions will also be used in the proof of case (ii).

Let $\alpha_1\beta_2-\alpha_2\beta_1=3^{\eta}\gamma$ for some integers $\eta\geq1$ and  $3\nmid \gamma$. Without loss of generality, we assume $\gcd(\alpha_1,\alpha_2)=\sigma$ with $3\nmid\sigma$(Otherwise, we can  choose $\sigma=\gcd(\beta_1,\beta_2)$ with $3\nmid\sigma$, because $\gcd(\alpha_1,\alpha_2,\beta_1,\beta_2)=1$).   Let $\alpha_1=\sigma t_1, \alpha_2=\sigma t_2$, where $\gcd(t_1,t_2)=1$, then there exist integers $p$ and $q$ such that $pt_1+qt_2=1$. Clearly, $\sigma=p\alpha_1+q\alpha_2$ and $\sigma|\gamma$. For convenience, we denote $\omega=p\beta_1+q\beta_2$ and $\vartheta=\gamma/\sigma $. Let
$A_3=\gamma\begin{bmatrix}
 t_1&-q \\
 t_2&p
\end{bmatrix}$. By noting that $t_2\alpha_1=t_1\alpha_2$ and $t_1\beta_2-t_2\beta_1=3^\eta\vartheta$, we have
\begin{eqnarray}\label{eq(3.21)}
 D_3=A_3 ^{-1}D=\frac{1}{\gamma}\begin{bmatrix}
 p&q \\
 -t_2&t_1
\end{bmatrix}\left\{ {\left( {\begin{array}{*{20}{c}}
0\\
0
\end{array}} \right),\left( {\begin{array}{*{20}{c}}
\alpha_1\\
\alpha_2
\end{array}} \right),\left( {\begin{array}{*{20}{c}}
\beta_1\\
\beta_2
\end{array}} \right)} \right\}=\left\{ {\left( {\begin{array}{*{20}{c}}
0\\
0
\end{array}} \right),\left( {\begin{array}{*{20}{c}}
 \frac{1}{\vartheta} \\ 0
\end{array}} \right),\left( {\begin{array}{*{20}{c}}
\frac{\omega}{\sigma \vartheta} \\ \frac{3^\eta}{\sigma}
\end{array}} \right)} \right\}
\end{eqnarray}
and
\begin{eqnarray}\label{eq(3.22)}
M_3=A_3^{-1}MA_3
=\begin{bmatrix}
(pa+qc)t_1+(pb+qd)t_2 & (pb+qd)p-(pa+qc)q \\
(ct_1-at_2)t_1+(dt_1-bt_2)t_2 & (dt_1-bt_2)p-(ct_1-at_2)q
\end{bmatrix}.
\end{eqnarray}
It is obvious that $M_3$ is an expanding integer matrix with $\gcd(\det(M_3),3)=\gcd(\det(M),3)=1$. By Lemma \ref{th(2.2)}, we get $n^*(\mu_{M_3,D_3})=n^*(\mu_{M,D})$.

In view of \eqref{eq(3.19)}, it is easy to calculate that
\begin{align}\label{eq(3.23)}
\mathcal{Z}(m_{D_3})=\{x\in\mathbb{R}^n:m_{D_3}(x)=0\}:=Z_0 \cup \widetilde{Z}_0,
\end{align}
where
\begin{eqnarray*}
Z_0=\left\{
\begin{pmatrix}
  \vartheta(\frac{1}{3}+k_1) \\ \frac{1}{3^{\eta+1}}(2\sigma-\omega-3\omega k_1+3\sigma k_2)
  \end{pmatrix}\
  :k_1,k_2\in\mathbb{Z}
  \right\},
 \end{eqnarray*}
and
\begin{eqnarray*}
\widetilde{Z}_0=\left\{
\begin{pmatrix}
  \vartheta(\frac{2}{3}+\widetilde{k}_1) \\ \frac{1}{3^{\eta+1}}(\sigma-2\omega-3\omega\widetilde{k}_1+3\sigma \widetilde{k}_2)
  \end{pmatrix}\
  :\widetilde{k}_1,\widetilde{k}_2\in\mathbb{Z}
  \right\}.
 \end{eqnarray*}

\begin{pro}\label{th(3.8)}
Let $\alpha_1$, $\alpha_2$, $\beta_1$, $\beta_2$, $\sigma$ and $\omega$ be defined as above. Then

(i) $2 \sigma-\omega \notin3\mathbb{Z}$ if $2\alpha_1-\beta_1\notin 3\mathbb{Z}$ or $2\alpha_2-\beta_2\notin 3\mathbb{Z}$;

(ii) $2 \sigma-\omega \in3\mathbb{Z}$ if $2\alpha_1-\beta_1, 2\alpha_2-\beta_2$
$\in3\mathbb{Z}$.
\end{pro}
\begin{proof}
Note that $\alpha_1=\sigma t_1, \alpha_2=\sigma t_2$, $pt_1+qt_2=1$, $\sigma=p\alpha_1+q\alpha_2$, $t_1\beta_2-t_2\beta_1=3^\eta\vartheta$ and $\omega=p\beta_1+q\beta_2$,
we have
\begin{align}\label{eq(3.24)}
2 \sigma-\omega=p(2\alpha_1-\beta_1)+q(2\alpha_2-\beta_2).
\end{align}
The case (ii) holds from \eqref{eq(3.24)} immediately.

For case (i), without loss of generality, suppose $2\alpha_1-\beta_1\notin 3\mathbb{Z}$. Since $\alpha_1=\sigma t_1$, $\alpha_2=\sigma t_2$ and $t_1\beta_2-t_2\beta_1=3^\eta\vartheta$, we have $t_2\alpha_1=t_1\alpha_2$ and
\begin{align}\label{eq(3.25)}
q3^\eta\vartheta=q(t_1\beta_2-t_2\beta_1)=qt_1(\beta_2-2\alpha_2)+qt_2(2\alpha_1-\beta_1).
\end{align}
Multiplying $2\alpha_1-\beta_1$ on both sides of $pt_1+qt_2=1$ yields $pt_1(2\alpha_1-\beta_1)+qt_2(2\alpha_1-\beta_1)=2\alpha_1-\beta_1$. Combining this with \eqref{eq(3.25)}, we get $t_1(p(2\alpha_1-\beta_1)+q(2\alpha_2-\beta_2))=-q3^\eta\vartheta+2\alpha_1-\beta_1$, which implies $2 \sigma-\omega=p(2\alpha_1-\beta_1)+q(2\alpha_2-\beta_2)\notin 3\mathbb{Z}$, because $2\alpha_1-\beta_1\notin 3\mathbb{Z}$.
\end{proof}

Let $\mathcal {A}_3(s)=\frac{1}{3^s}\{(3^{s-1},l)^t:0\leq l\leq 3^s-1,l\in \mathbb{Z}\}$ be defined by (\ref{eq(1.3)}) and define
\begin{align}\label{eq(3.26)}
\mathcal {A}^i_3(s):=\frac{1}{3^s}\{(3^{s-1},l)^t:0\leq l\leq 3^s-1,l=3k+i, k\in \mathbb{Z}\}
\end{align}
for $i=0,1,2$. Obviously, $\bigcup_{i=0}^{2}\mathcal {A}^i_3(s)=\mathcal {A}_3(s)$.

\begin{pro}\label{th(3.9)}
Let $Z_0$, $\eta$, $\sigma$, $\omega$ and $\vartheta$ be given by (\ref{eq(3.23)}), and let $\mathcal {A}_3$, $\mathcal {A}^i_3$ be defined by (\ref{eq(1.3)}) and \eqref{eq(3.26)}, respectively. Then there exist $\tau\notin3\mathbb{Z}$ such that the following two conclusions hold:

(i) If $2 \sigma-\omega \notin3\mathbb{Z}$, then  ~$\tau\vartheta(\mathcal {A}^i_3(\eta+1)+\mathbb{Z}^2)\subset Z_0$ for  $i=1$ or $2$;

(ii) If $2 \sigma-\omega \in3\mathbb{Z}$, then ~$\tau\vartheta(\mathcal {A}_3(\eta)+\mathbb{Z}^2)\subset Z_0$.
\end{pro}

\begin{proof}
(i) Since $3\nmid\sigma$, let $\sigma=3a+\kappa$ for some integers $a$ and $\kappa=1$ or $2$.
Let $\tau=\kappa\sigma$, we will prove that there exists $i=1$ or $2$ such that ~$\kappa\sigma\vartheta(\mathcal {A}^i_3(\eta+1)+\mathbb{Z}^2)\subset Z_0$. In fact, this only need to show that for any $k'_1,k'_2\in \mathbb{Z}$,  there exist $k_1,k_2\in \mathbb{Z}$ such that
\begin{align}\label{eq(3.27)}
\left\{
\begin{array}{ll}
\kappa\sigma\vartheta (\tfrac{1}{3}+k_1^\prime)= \vartheta(\frac{1}{3}+k_1), \\
\kappa\sigma\vartheta(i+3k_2^\prime)/3^{\eta+1}=(2\sigma -\omega-3\omega k_1+3\sigma k_2)/3^{\eta+1},
\end{array}
\right.
\end{align}
where $i=1$ or $2$ and it is independent of $k'_1,k'_2$. Obviously,
\eqref{eq(3.27)} holds if and only if
\begin{align*}
\left\{
\begin{array}{ll}
k_1= (\kappa\sigma-1)/3+\kappa\sigma k_1', \\
k_2=(\kappa \vartheta i+\kappa \omega-2)/3+\kappa \omega k'_1+\kappa \vartheta k_2'.
\end{array}
\right.
\end{align*}
Since $\kappa=1$ or $2$ and $\kappa\sigma=\kappa(3a+\kappa)=3a\kappa+\kappa^2=\kappa^2=1\;({\rm mod} \ 3)$, then $k_1\in\mathbb{Z}$. We then prove that $k_2$ is also an integer. It follows from $\sigma=3a+\kappa$ and $\kappa^2=1\;({\rm mod} \ 3)$ that $\kappa(2 \sigma-\omega)=6a\kappa+2\kappa^2-\kappa \omega=2-\kappa \omega\;({\rm mod} \ 3)$. This together with  $2 \sigma-\omega,  \kappa \notin3\mathbb{Z}$ implies $2-\kappa \omega\notin 3\mathbb{Z}$. Therefore, by noting that $\kappa,\vartheta \notin3\mathbb{Z}$, we can always choose $i=1$ or $2$ such that $\kappa \vartheta i=2-\kappa \omega\;({\rm mod} \ 3)$, which shows that $k_2\in \mathbb{Z}$. Hence \eqref{eq(3.27)} holds and ~$\tau\vartheta(\mathcal {A}^i_3(\eta+1)+\mathbb{Z}^2)\subset Z_0$ for $i=1$ or $2$.

(ii) Let $\tau=\kappa\sigma$ as case (i), we will show that~$\kappa\sigma\vartheta(\mathcal {A}_3(\eta)+\mathbb{Z}^2)\subset Z_0$. Indeed, we only need to show that for any $k_1^\prime,k_2^\prime\in\mathbb{Z}$, there exist $k_1,k_2\in\mathbb{Z}$ such that \eqref{eq(3.27)} holds for $i=0$, i.e.,
\begin{align}\label{eq(3.28)}
\left\{
\begin{array}{ll}
\kappa\sigma\vartheta (\tfrac{1}{3}+k_1^\prime)=\vartheta(\tfrac{1}{3}+k_1),\\
\kappa\sigma\vartheta\cdot k_2^\prime/3^\eta=(2\sigma-\omega-3\omega k_1+3\sigma k_2)/3^{\eta+1}.
\end{array}
\right.
\end{align}
\eqref{eq(3.28)} holds if and only if
\begin{align*}
\left\{
\begin{array}{ll}
k_1= (\kappa\sigma-1)/3+\kappa\sigma k_1', \\
k_2=(\kappa \omega-2)/3+\kappa \omega k'_1+\kappa \vartheta k_2'.
\end{array}
\right.
\end{align*}
Using the similar proof as case (i), we obtain $k_1,k_2\in\mathbb{Z}$ by $\kappa\sigma=1\;({\rm mod} \ 3)$, $2 \sigma-\omega\in 3\mathbb{Z}$ and $0=\kappa(2 \sigma-\omega)=2-\kappa \omega\;({\rm mod} \ 3)$. Hence (\ref{eq(3.28)}) holds and $\tau\vartheta(\mathcal {A}_3(\eta)+\mathbb{Z}^2)\subset Z_0$.
\end{proof}

We now continue to prove Case 2 of Theorem \ref{th(1.1)}(i). By Propositions \ref{th(3.8)}(i) and \ref{th(3.9)}(i), there exists $\tau\notin3\mathbb{Z}$ such that~$\tau\vartheta(\mathcal {A}^i_3(\eta+1)+\mathbb{Z}^2)\subset Z_0$ for $i=1$ or $2$.

Now considering an integer matrix $M_3$ in \eqref{eq(3.22)} with its transposed conjugate
\begin{align}\label{eq(3.29)}
M_3^{*}=\bar{N}\begin{bmatrix}
0 & 1\\
1 & 1
\end{bmatrix}:=\bar{N}\hat{M}\in GL(2,\mathbb{F}_3),
\end{align}
where $\bar{N}=3^2n+1$ is a sufficient large integer such that $M_3$ is an expanding integer matrix. We will show that $M_3$ satisfies $n^*(\mu_{M_3,D_3})=9$. It is easy to check that $O_3(M_3^{*})=O_3(\hat{M})= 8$ and ${M_3^{*}}^8=(3^2n+1)^8\left(3\begin{bmatrix}
4 & 7\\
7 & 11
\end{bmatrix}+I\right):=3\begin{bmatrix}
l_1 & l_2\\
l_3 & l_4
\end{bmatrix}+I$. Similar to \eqref{eq(3.16)}, it is easy to check that $l_i\notin 3\mathbb{Z}$ for all $1\leq i\leq 4$, hence Theorem \ref{th(3.3)} implies that $O_{3^{\eta+1}}(M_3^{*})=8\cdot3^{\eta}$ and $\bigcup_{j=1}^{\infty} M_3^{*j}\xi=\bigcup_{j=1}^{8\cdot 3^{\eta}} M_3^{*j}\xi\;({\rm mod}~\mathbb{Z}^2)$ for any $\xi\in \mathring{E}_{3^{\eta+1}}^2$. Let $\det (M_3^{*})=L$ and $\varphi$ be the Euler's phi function. Note that $\gcd(L,3)=1$, $\varphi(3^{\eta+1})=2\cdot3^{\eta}$ and~$\tau\vartheta(\mathcal {A}^i_3(\eta+1)+\mathbb{Z}^2)\subset Z_0\subset\mathcal{Z}(m_{D_3})$ for $i=1$ or $2$, by using the similar proof as \eqref{eq(3.14)}, we obtain
\begin{align} \label{eq(3.30)}
\mathcal{ Z}(\hat{\mu}_{M_3,D_3}) \supset  \bigcup_{j=1}^{8\cdot 3^{\eta}}L^{2\cdot3^{\eta}\cdot j}\tau\vartheta(M_3^{*j}\mathcal {A}^i_3(\eta+1)+\Bbb Z^2).
\end{align}
Note that $M_3^*$ satisfies Theorem \ref{th(3.4)}, according to \eqref{eq(3.5)}, \eqref{eq(3.10)} and \eqref{eq(3.26)}, for any $i=1$ or $2$, we have
\begin{equation}\label{eq(3.31)}
 \bigcup_{j=1}^{8\cdot 3^{\eta}}M_3^{*j}\mathcal {A}^i_3(\eta+1)=\bigcup_{l\in \mathcal{R}_{3,\eta+1}(i)}\mathcal{B}_{3,\eta+1}(l)= \mathring{E}_{3^{\eta+1}}^2\setminus\mathring{E}_{3^{\eta}}^2\ \;({\rm mod} \; \Bbb Z^2).
\end{equation}
Let $\Lambda=L^{ 2\cdot 3^{\eta} \cdot8\cdot 3^{\eta}} \tau\vartheta E$, where
\begin{equation*}
E=\frac{1}{3^{\eta+1}}\left\{\begin{pmatrix}
  0 \\ 0
  \end{pmatrix},\begin{pmatrix}
  0 \\ 1
  \end{pmatrix},\begin{pmatrix}
 0 \\ 2
  \end{pmatrix},\begin{pmatrix}
  1 \\ 0
  \end{pmatrix},\begin{pmatrix}
  1 \\ 1
  \end{pmatrix},\begin{pmatrix}
 1 \\ 2
  \end{pmatrix},\begin{pmatrix}
  2 \\ 0
  \end{pmatrix},\begin{pmatrix}
  2 \\ 1
  \end{pmatrix},\begin{pmatrix}
  2 \\ 2
  \end{pmatrix}\right\}.
\end{equation*}
We will show that $\mathcal {E}_\Lambda=\{e^{2\pi i\langle\lambda,x\rangle}:\lambda\in\Lambda\}$ is an orthogonal set of $L^2(\mu_{M_3,D_3})$. For any $\lambda_1\ne \lambda_2\in\Lambda$, there exists $\lambda^\prime\in\mathring{E}_{3^{\eta+1}}^2\setminus\mathring{E}_{3^{\eta}}^2\ ({\rm mod}\; \Bbb Z^2)$ such that $\lambda_1-\lambda_2= L^{ 2\cdot 3^{\eta} \cdot8\cdot 3^{\eta}}\tau\vartheta\lambda^\prime$. By~$(\ref{eq(3.31)})$, there exist $\lambda_0 \in \mathcal {A}^i_3(\eta+1)$ and $1\leq j_0\leq 8\cdot 3^{\eta}$ such that $\lambda^\prime=M_3^{*j_0}\lambda_0\;({\rm mod}\; \Bbb Z^2)$. Using the similar proof as in the last part of Theorem \ref{th(1.2)} and~$(\ref{eq(3.30)})$, we have
\begin{align*} \nonumber
\lambda_1-\lambda_2 &\in L^{ 2\cdot 3^{\eta} \cdot8\cdot 3^{\eta}}\tau\vartheta(M_3^{*j_0}\lambda_0+\Bbb Z^2)\subset L^{ 2\cdot 3^{\eta} \cdot8\cdot 3^{\eta}}\tau\vartheta(M_3^{*j_0}\mathcal {A}^i_3(\eta+1)+\Bbb Z^2)\\
&\subset L^{ 2\cdot 3^{\eta}j_0}\tau\vartheta(M_3^{*j_0}\mathcal {A}^i_3(\eta+1)+\Bbb Z^2)\subset \mathcal{ Z}(\hat{\mu}_{M_3,D_3}).
\end{align*}
This shows that $(\Lambda-\Lambda)\setminus\{0\}\subset\mathcal{ Z}(\hat{\mu}_{M_3,D_3})$. It follows from~$(\ref{eq(2.2)})$ that the elements in $\mathcal {E}_\Lambda$ are mutually orthogonal, and hence $n^*(\mu_{M_3,D_3})\geq 9$. Combining with $n^*(\mu_{M_3,D_3})=n^*(\mu_{M,D})\leq 9$, we obtain $n^*(\mu_{M,D})=9$ and complete the proof of Theorem \ref{th(1.1)}(i).
\medskip

(ii) Suppose $2\alpha_1-\beta_1, 2\alpha_2-\beta_2\in3\mathbb{Z}$, then $\alpha_1\beta_2-\alpha_2\beta_1\in 3\mathbb{Z}$. We mainly use Theorem \ref{th(1.2)} to complete the proof. By the same transformation as Case 2 of (i), we get the same matrix $M_3$ and digit set $D_3$ in \eqref{eq(3.21)} and \eqref{eq(3.22)}. Thus $n^*(\mu_{M_3,D_3})=n^*(\mu_{M,D})$, and the zero set of $m_{D_3}(x)$ is also (\ref{eq(3.23)}). By Proposition \ref{th(3.8)}(ii), $\vartheta\notin 3\mathbb{Z}$ and \eqref{eq(3.23)},  we have
 \begin{eqnarray}\label{eq(3.32)}
 \mathcal{Z}(m_{D_3})\subset \mathring{E}_{3^{\eta}}^2+\mathbb{Z}^2.
 \end{eqnarray}
At the same time, it follows from Propositions \ref{th(3.8)}(ii) and \ref{th(3.9)}(ii) that there exists $\tau\notin3\mathbb{Z}$ such that~$\tau\vartheta(\mathcal {A}_3(\eta)+\mathbb{Z}^2)\subset Z_0$. Then (\ref{eq(3.23)}) and (\ref{eq(3.32)}) imply that $\tau\vartheta(\mathcal {A}_3(\eta)+\mathbb{Z}^2)\subset \mathcal{ Z}(m_{D_3})\subset \mathring{E}_{3^\eta}^2+\mathbb{Z}^2$, where $\tau\vartheta \notin 3\mathbb{Z}$. By \eqref{eq(3.22)}  and  the definition of matrix $A_3$, it is easy to see that $M_3$ can be any expanding integer matrix with $\gcd(\det(M_3),3)=1$. Therefore, by Theorem \ref{th(1.2)},  $n^*(\mu_{M,D})=n^*(\mu_{M_3,D_3})\leq3^{2\eta}$ and the number $3^{2\eta}$ is the best.

This completes the proof of Theorem \ref{th(1.1)}.
\end{proof}

\end{document}